\newcolumntype{L}{>{\centering\arraybackslash}p{3cm}}
\newtheorem{thm}{Theorem}[section]
\newtheorem{lem}[thm]{Lemma}
\newtheorem{cor}[thm]{Corollary}
\newtheorem{prop}[thm]{Proposition}
\theoremstyle{definition}
\newtheorem{defn}[thm]{Definition}
\theoremstyle{remark}
\newtheorem*{rem}{Remark}
\newtheorem{ex}[thm]{Example}
\newcommand{\PP}{\mathbb{P}}
\newcommand{\CC}{\mathbb{C}}
\newcommand{\ZZ}{\mathbb{Z}}
\newcommand{\QQ}{\mathbb{Q}}
\newcommand{\cE}{\mathcal{E}}
\newcommand{\cK}{\mathcal{K}}
\newcommand{\cB}{\mathcal{B}}
\newcommand{\Cstar}{\CC^{\ast}}
\newcommand{\Zfin}[1]{\ZZ/#1\ZZ}  
\newcommand{\set}[1]{\left\{#1\right\}}  
\newcommand{\inn}[1]{\left\langle#1\right\rangle}  
\newcommand{\klat}{L_{\text{K3}}}  
\newcommand{\mirror}[1]{{#1}^\vee}
\newcommand{\pp}[4]{[#1:#2:#3:#4]}  
\newcommand{\Gmax}[1]{G_{#1}} 
\newcommand{\J}[1]{j_{#1}} 
\newcommand{\SLn}[1]{\SL({#1}, \CC)}  
\newcommand{\SLgp}[1]{{\SL}_{#1}}  
\newcommand{\triv}{\{0\}}  
\newcommand{\A}[1]{A_{#1}}  
\DeclareMathOperator{\Hom}{Hom}  
\DeclareMathOperator{\Aut}{Aut}  
\DeclareMathOperator{\SL}{SL}   
\DeclareMathOperator{\rk}{rank}  
\DeclareMathOperator{\sign}{sign}
\DeclareMathOperator{\GL}{GL}
\def\imod#1{\allowbreak\mkern10mu({\operator@font mod}\,\,#1)}
\title{BHK mirror symmetry for K3 surfaces with non-symplectic automorphism} 
\author{Paola Comparin, Nathan Priddis}
\begin{document}

\begin{abstract}
In this paper we consider the class of K3 surfaces defined as hypersurfaces in weighted projective space, that admit a non-symplectic automorphism of non-prime order, excluding the orders 4, 8, and 12. We show that on these surfaces the Berglund-H\"ubsch-Krawitz mirror construction and mirror symmetry for lattice polarized K3 surfaces constructed by Dolgachev agree; that is, both versions of mirror symmetry define the same mirror K3 surface.  \end{abstract}
\keywords{K3 surfaces, mirror symmetry, mirror lattices, Berglund-H\"ubsch-Krawitz construction}
\subjclass[2010]{Primary 14J28, 14J33; Secondary 14J17, 11E12, 14J32} \maketitle

\section*{Introduction}
Since its discovery by physicists nearly 30 years ago, mirror symmetry has been the focus of much interest for both physicists and mathematicians. 
Although mirror symmetry has been ``proven'' physically, we have much to learn about the phenomenon mathematically. When we speak of mirror symmetry mathematically, there are many different constructions or rules for determining when a Calabi--Yau manifold is ``mirror'' to another. The constructions are often formulated in terms of families of Calabi--Yau manifolds. 
A natural question is whether, in a situation where more than one version can apply, they produce the same mirror (or mirror family). In this article, we consider two versions of mirror symmetry for K3 surfaces, and show that in this case the answer is affirmative, as we might expect.

The first version of mirror symmetry of interest to us is known as BHK mirror symmetry. This was formulated by Berglund--H\"ubsch \cite{berghub}, Berglund--Henningson \cite{berghenn} and Krawitz \cite{krawitz} for Landau--Ginzburg models. 
Using the ideas of the Landau--Ginzburg/Calabi--Yau correspondence, BHK mirror symmetry also produces a version of mirror symmetry for certain Calabi--Yau manifolds (see Section~\ref{sec-mirror}).

In the BHK construction, one starts with a quasihomogeneous and invertible polynomial $W$
and a group $G$ of symmetries of $W$ satisfying certain conditions (see Section~\ref{quasi_sec} for more details).
From this data, we obtain the Calabi--Yau (orbifold) defined as the hypersurface $Y_{W,G}=\set{W=0}/G$.
Given an LG pair $(W,G)$, BHK mirror symmetry allows to obtain another LG pair $(W^T,G^T)$
satisfying the same conditions, and therefore another Calabi--Yau (orbifold) $Y_{W^T,G^T}$. 
We say that $Y_{W,G}$ and $Y_{W^T,G^T}$ form a BHK mirror pair. In our case, we resolve singularities to obtain K3 surfaces $X_{W,G}$ and $X_{W^T,G^T}$, which we call a BHK mirror pair. When no confusion arise, we will denote these mirror K3 surfaces simply by $X$ and $X^T$, respectively. 

Another form of mirror symmetry for K3 surfaces, which we will call LPK3 mirror symmetry, is described by Dolgachev in \cite{dolgachev}. 
LPK3 mirror symmetry says that the mirror family of a given K3 surface admitting a polarization by a lattice $M$ is the family of K3 surfaces polarized by the \emph{mirror lattice} $M^\vee$. 
We say that the two K3 surfaces are LPK3 mirror when they are lattice polarized and they belong to LPK3 mirror families (see details in Section \ref{sec-LPK3}).

Returning to the question posed earlier, one can ask 
whether the BHK mirror symmetry and LPK3 mirror symmetry produce the same mirror. 
A similar question was considered by Belcastro in \cite{belcastro}. 
She considers a family of K3 surfaces that arise as (the resolution of) hypersurfaces in weighted projective space,
uses the Picard lattice of a general member of the family as polarization, and finds that this particular polarization does not yield very many mirror families. 

This polarization fails to yield mirror symmetry for at least two reasons. First, it does not consider the group of symmetries. And secondly---and perhaps more compelling---a result proved by Lyons--Olcken (see \cite{LO}) following Kelly (see \cite{kelly}) shows that the rank of the Picard lattice of $X_{W,G}$ does not depend on $G$ at all. 
This fact suggests that we need a finer invariant than the full Picard lattice to exhibit LPK3 mirror symmetry. 
We need to find a polarizing lattice that recognizes the role of the group $G$. 

The correct polarizing lattice seems to be the invariant lattice 
\[
S_X(\sigma)=\{x\in H^2(X,\mathbb Z):\sigma^*x=x \}
\] 
of a certain non-symplectic automorphism $\sigma\in\Aut{X}$.  This was proven in \cite{ABS} and \cite{CLPS} in the case of K3 surfaces admitting a non--symplectic automorphism prime order.  

In what follows, we generalize the results of \cite{ABS} and \cite{CLPS} to K3 surfaces admitting a non--symplectic automorphism $\sigma$ of any finite order, excepting orders 4, 8 and 12. 
By polarizing each of the K3 surfaces in question by the invariant lattice $S_X(\sigma)$ of a non-symplectic automorphism $\sigma$ of finite order, we prove that BHK mirror symmetry and LPK3 mirror symmetry agree. This is done as in the previous works, by showing that $S_{X^T}(\sigma^T)$ is the mirror lattice of $S_{X}(\sigma)$.

This situation differs significantly from the case of prime order automorphism in that the invariant lattice is no longer $p$-elementary and there is no longer a (known) relationship between the invariants of $S_X(\sigma)$ and the fixed locus of $\sigma$. Hence, instead of studying the fixed locus in order to recover $S_X(\sigma)$, we determine $S_X(\sigma)$ with other methods.  
As for orders 4, 8 and 12, more details are required and the methods are slightly different, so that this will be the object of further work.

The question of whether two versions of mirror symmetry produce the same mirror has been investigated by others as well, but for different constructions of mirror symmetry than we consider here. Partial answers to the question are given by Artebani--Comparin--Guilbot in \cite{good_pairs}, where Batyrev and BHK mirror constructions are both seen as specializations of a more general construction based on the definition of good pairs of polytopes. Rohsiepe also considered Batyrev mirror symmetry in connection with LPK3 mirror symmetry in \cite{Roh}, where he shows a duality for the K3's obtained as hypersurfaces in one of the Fano toric varieties constructed by one of the 4319 3-dimensional reflexive polytopes. As in Belcastro's paper \cite{belcastro}, Rohsiepe used the Picard lattice of a general member of the family of such hypersurfaces to polarize the K3 surfaces. As it turns out, only 14 of the 95 weight systems yield a K3 surface in a Fano ambient space. We do not consider such a restriction in the current paper.

Clarke has also described a framework which he calls an auxilliary Landau--Ginzburg model, which encapsulates several versions of mirror symmetry, including Batyrev--Borisov, BHK, Givental's mirror theorem and Hori--Vafa mirror symmetry (see \cite{clarke}). Kelly also has some results in this direction in \cite{kelly}, where he shows by means of Shioda maps, that certain BHK mirrors are birational. The current article is similar in scope to these articles.

There are also several papers treating non--symplectic automorphisms of K3 surfaces, which are closely related to this paper. These include \cite{order_four} for automorphisms of order four, \cite{order_six} for order six, \cite{Schutt2010} for order $2^p$, \cite{order_eight} for order eight, and \cite{order_sixteen} for order sixteen. In general, it seems difficult to find the invariant lattice of a non--symplectic automorphism on a K3 surface. The current article gives some new methods for computing the invariant lattice, which we hope will yield more general results. 

As complementary results, in doing this classification we discovered the existence of one of the cases that couldn't be discovered in the order 16 classification in \cite{order_sixteen} namely a K3 surface admitting a purely non--symplectic automorphism of order sixteen, which has as fixed locus a curve of genus zero, and 10 isolated fixed points. This is number 58 in Table~\ref{tab-16}. Dillies has also found such an example in \cite{dillies16}. 

Additionally, our computations unearthed a different result from Dillies in \cite{order_six}. If we look at Table~\ref{tab-6}, we find the invariant lattice for number 29 and one of rows of 5d has an invariant lattice of order 12. These K3 surfaces admit an automorphism of order three, namely $\sigma_6^2$, with invariants $(g,n,k)=(0,8,5)$, but the automorphism $\sigma_6$ fixes one rational curve and 8 isolated points. This is missing from Table~1 in \cite{order_six}. Furthermore, the same can be said for the K3 surfaces in same table which have $v\oplus 4\omega_{2,1}^{1}$ as the invariant lattice, namely one of 8b, 8d, 33a, and 33b. These K3 surfaces admit a non--symplectic automorphism of order three with invariants $(g,n,k)=(0,7,4)$, but $\sigma_6$ fixes one rational curve and seven isolated points. This is also missing from the Table in \cite{order_six}. 

The paper is organized as follows. 
In Section \ref{sec-background} we recall some definitions and results on K3 surfaces and lattices, while Section \ref{sec-mirror} is dedicated to the introduction of mirror symmetry, both LPK3 and BHK. 
The main result of the paper is Theorem \ref{t:main_thm}. Section \ref{sec-method} is dedicated to the explanation of the methods used in the proof.
In Section \ref{sec-ex} we report some meaningful examples, and Section \ref{sec:tables} contains the tables proving the main theorem.

We would like to thank Michela Artebani, Alice Garbagnati, Alessandra Sarti and Matthias Sch\"utt
for many useful discussions and helpful insights. We would also thank Antonio Laface for the help on magma code \cite{magma}.
The first author has been partially supported by Proyecto Fondecyt Postdoctorado N. 3150015 and Proyecto Anillo ACT 1415 PIA Conicyt.

\section{Background}
\label{sec-background}
In this section we recall some facts about K3 surfaces and lattices. 
For notations and theorems, we follow \cite{surfaces, nikulin}

\subsection{K3 Surfaces}

A \emph{K3 surface} is a compact complex surface $X$ with trivial canonical bundle and $\dim H^1(X,\mathcal O_X)=0$. All K3 surfaces considered here will be projective and minimal. 

It is well-known that all K3 surfaces are diffeomorphic and K\"ahler. 
Given a K3 surface $X$, $H^2(X,\ZZ)$ is free of rank 22,
the Hodge numbers of $X$ are $h^{2,0}(X)=h^{0,2}(X)=1$, $h^{1,1}(X)=20$ and $h^{1,0}(X)=h^{0,1}(X)=0$, 
and the Euler characteristic is $24$. 
The Picard group of $X$ coincides with the N\'eron--Severi group, 
and both are torsion free. 

From the facts above, we see that $H^{2,0}(X)$ is one--dimensional. 
In fact, it is generated by a nowhere--vanishing two--form $\omega_X$,
which satisfies $\langle \omega_X,\omega_X \rangle=0$ and $\langle \omega_X,\overline{\omega}_X \rangle>0$. 

Given an automorphism $\sigma$ of the K3 surfaces $X$,
we get an induced Hodge isometry $\sigma^*$, which preserves $H^{2,0}(X)$, 
i.e. $\sigma^*\omega_X=\lambda_\sigma \omega_X$ for some $\lambda_\sigma\in \CC^*$. 
We call $\sigma$ \emph{symplectic} if $\lambda_\sigma=1$ 
and \emph{non-symplectic} otherwise. 
If $\sigma$ is an automorphism with nonprime order $m$, 
we say $\sigma$ is \emph{purely non-symplectic} if $\lambda_\sigma=\xi_m$ 
with $\xi_m$ a primitive $m$-th root of unity.

\subsection{Lattice theory}\label{s:lattice}

A \emph{lattice} is a free abelian group $L$ of finite rank 
together with a non-degenerate symmetric bilinear form $B\colon L \times L \to \ZZ$. 
A lattice $L$ is \emph{even} if $B(x,x)\in 2\ZZ$ for each $x\in L$.  
The \emph{signature} of $L$ is the signature $(t_+,t_-)$ of $B$. 
A lattice $L$ is \emph{hyperbolic} if its signature is $(1,\rk(L)-1)$. 
A sublattice $L\subset L'$ is called \emph{primitive} if $L'/L$ is free. 
On the other hand, a lattice $L'$ is an \emph{overlattice} of finite index of $L$ 
if $L\subset L'$ and $L'/L$ is a finite abelian group. 
We will refer to it simply as an \emph{overlattice}.

Given a finite abelian group $A$, 
a \emph{finite quadratic form} is a map $q:A\to \QQ/2\ZZ$ such that
for all $n\in \ZZ$ and $a,a'\in A$
$q(na)=n^2q(a)$ 
and 
$q(a+a')-q(a)-q(a')\equiv 2b(a,a') \imod{2\ZZ}$ 
where $b:A\times A\to \QQ/\ZZ$ is a finite symmetric bilinear form. 
We define orthogonality on subgroups of $A$ via $b$. 

Given a lattice $L$, the corresponding bilinear form $B$ induces an embedding 
$L \hookrightarrow L^\ast$, where $L^\ast:= \Hom(L,\ZZ)$. 
The \emph{discriminant group} $A_{L}:= L^\ast/L$ is a finite abelian group. 
In fact, if we write $B$ as a symmetric matrix in terms of a minimal set of generators of $L$, 
then the order of $A_{L}$ is equal to $|\det(B)|$. 
The bilinear form $B$ can be extended to $L^*\times L^*$ taking values in $\QQ$. 
If $L$ is even, this induces a finite quadratic form $q_L: A_L\to \QQ/2\ZZ$.

The minimal number of generators of $A_L$ is called the \emph{length} of $L$.
If $A_L$ is trivial, $L$ is called \emph{unimodular}. 
For a prime number $p$, $L$ is called \emph{$p$-elementary} 
if $A_L \simeq (\ZZ/p\ZZ)^a$ for some $a\in\mathbb N_0$; 
in this case, $a$ is the length of $A_L$. 

Two lattices $L$ and $K$ are said to be \emph{orthogonal}, 
if there exists an even unimodular lattice $S$ such that $L\subset S$ and $L^\perp_S\cong K$. 
Orthogonality will be a key ingredient in the definition of mirror symmetry for K3 surfaces. The following fact will also be useful. 

\begin{prop}[cf. {\cite[Corollary 1.6.2]{nikulin}}]\label{p:orth}
Two lattices $L$ and $K$ are orthogonal if and only if $q_L\cong -q_K$. 
\end{prop}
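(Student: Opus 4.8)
The plan is to reduce the statement to Nikulin's dictionary between even overlattices of a lattice and isotropic subgroups of its discriminant form, applied to $M:=L\oplus K$. The central object throughout is the \emph{glue group}
\[
\Gamma \;=\; S/(L\oplus K)\;\subseteq\; A_{L\oplus K}\;=\;A_L\oplus A_K,
\]
which records how $L$ and $K$ are amalgamated inside $S$. Since $S$ is even and integral we have the chain $L\oplus K\subseteq S\subseteq S^\ast\subseteq L^\ast\oplus K^\ast$, so $\Gamma$ is genuinely a subgroup of $A_L\oplus A_K$, and $q_{L\oplus K}=q_L\oplus q_K$.

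For the forward direction, assume $S$ is even unimodular with $L\subseteq S$, which I take to be a primitive embedding (the relevant invariant being $q_L$, this is what the definition intends, since $K$ is an orthogonal complement), and $L^\perp_S\cong K$. First I would check that $\Gamma$ is \emph{isotropic} for $q_L\oplus q_K$: writing any $v\in S$ as $v=x+y$ with $x\in L\otimes\QQ$ and $y\in K\otimes\QQ$ its orthogonal projections, integrality of $S$ forces $x\in L^\ast$ and $y\in K^\ast$, and evenness gives $q_L(\bar x)+q_K(\bar y)\equiv\langle v,v\rangle\equiv 0$ in $\QQ/2\ZZ$. Next I would show that the two projections $\Gamma\to A_L$ and $\Gamma\to A_K$ are isomorphisms. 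Injectivity follows from primitivity of $L$ and $K$ in $S$; surjectivity comes from an order count, since unimodularity gives $[S:L\oplus K]^2=|A_L|\,|A_K|$, i.e. $|\Gamma|^2=|A_L|\,|A_K|$, while injectivity of both projections forces $|\Gamma|\le\min(|A_L|,|A_K|)$, and these are compatible only when $|\Gamma|=|A_L|=|A_K|$. Hence $\Gamma$ is the graph of a group isomorphism $\gamma:A_L\to A_K$, and isotropy reads $q_K(\gamma(x))=-q_L(x)$ for all $x$; together with the accompanying bilinear identity this shows $\gamma$ intertwines the discriminant forms, so $q_L\cong -q_K$.

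For the reverse direction, suppose $\gamma:A_L\to A_K$ is an isomorphism with $q_K\circ\gamma=-q_L$. I would form the graph $\Gamma=\{(x,\gamma(x)):x\in A_L\}\subseteq A_L\oplus A_K=A_{L\oplus K}$, which the hypothesis makes isotropic. A short computation with orders shows $|\Gamma|^2=|A_L|\,|A_K|=|A_{L\oplus K}|$, so $\Gamma$ is a maximal isotropic subgroup with $\Gamma=\Gamma^\perp$. Nikulin's correspondence then produces an even overlattice $S\supseteq L\oplus K$ with $\Gamma=S/(L\oplus K)$ and discriminant form induced by $(q_L\oplus q_K)$ restricted to $\Gamma^\perp/\Gamma$; since $\Gamma^\perp=\Gamma$, we get $A_S=0$, i.e. $S$ is even unimodular. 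Because $\Gamma$ projects isomorphically onto each factor, $L$ and $K$ sit primitively in $S$ with $K=L^\perp_S$, exhibiting $L$ and $K$ as orthogonal.

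The main obstacle I expect is the bookkeeping in the forward direction: proving that both projections of the glue group are \emph{onto} (not merely injective) requires using the primitivity of $L$ and $K$ together with the exact order relation coming from unimodularity, and one must be careful that $L$ is taken primitive so that $A_L$ and $q_L$ are the right objects. The remaining point needing attention is to upgrade the isotropy of $\Gamma$ from a statement about $q$-values to a genuine isometry of discriminant \emph{forms}, which uses the bilinear compatibility $b_K(\gamma x,\gamma x')=-b_L(x,x')$ that accompanies the quadratic condition.
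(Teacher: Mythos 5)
Your proof is correct. There is nothing internal to compare it against: the paper does not prove this proposition but simply quotes it from Nikulin (Corollary 1.6.2). Your argument is essentially Nikulin's own, reducing the statement to the correspondence between even overlattices of $L\oplus K$ and isotropic subgroups of $A_L\oplus A_K$, which the paper itself records as Proposition~\ref{p:overl}. Both directions are sound: in the forward direction, the order relation $|\Gamma|^2=|A_L|\,|A_K|$ coming from unimodularity, combined with injectivity of the two projections (which follows from primitivity of $L$ and of $K=L^\perp_S$), does force both projections to be bijective; in the converse, the graph of $\gamma$ is isotropic with $\Gamma=\Gamma^\perp$, so the overlattice it determines is even unimodular and contains $L$ and $K$ primitively as mutual orthogonal complements. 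One point you were right to flag: as literally written, the paper's definition of orthogonality does not require $L\subset S$ to be primitive, and the ``only if'' direction is false without that hypothesis --- for instance, with $x=e-f\in U$ (so $x^2=-2$), the sublattice $\ZZ(2x)\cong\langle -8\rangle\subset U$ has orthogonal complement $\langle 2\rangle$, yet $q_{\langle -8\rangle}\not\cong -q_{\langle 2\rangle}$ since the discriminant groups have orders $8$ and $2$. Primitivity is what Nikulin assumes and what your proof correctly uses. Finally, the ``remaining point'' you worried about is automatic: for finite quadratic forms the bilinear form is determined by $q$ via $2b(a,a')\equiv q(a+a')-q(a)-q(a') \imod{2\ZZ}$, so the identity $q_K\circ\gamma=-q_L$ already implies the bilinear compatibility.
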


We recall the definition of several lattices that we will encounter later. 
The lattice $U$ is the hyperbolic lattice of rank 2 whose bilinear form is given by the matrix $\left( \begin{array}{cc}
0 & 1 \\
1 & 0  \\
\end{array} \right)$.

The lattices $A_n, D_m, E_6,E_7, E_8, n\geq 1, m\geq 4$ are the even negative definite lattices associated to the respective Dynkin diagrams.
For $n\geq 1$, the lattice $A_n$ has rank $n$ and its discriminant group is $\ZZ/({n+1})\ZZ$. If $p$ is prime, $A_{p-1}$ is $p$-elementary (with $a=1$). 
For $m\geq 4$, the lattice $D_m$ has rank $m$ and its discriminant group is $\ZZ/2\ZZ\oplus \ZZ/2\ZZ$ for $m$ even, and $\ZZ/4\ZZ$ for $m$ odd. 
Finally, $E_6,E_7,E_8$ have ranks 6, 7, and 8 and discriminant groups of order 3, 2, and 1, respectively. 

For $p\equiv 1 \pmod 4$ the lattice $H_p$ is the hyperbolic even lattice of rank 2, whose bilinear form is given by the matrix 
\[ 
H_p=\left( \begin{array}{cc}
\frac{(p+1)}2& 1\\
1&2\end{array} \right).
\]
The discriminant group of $H_p$ is $\ZZ/p\ZZ$. 

There are two non--isomorphic hyperbolic lattices of rank 2 with discriminant group $\ZZ/9\ZZ$ defined by the matrices
\[ L_9=\left( \begin{array}{cc}
-2& 1\\
1&4\end{array} \right),\qquad 
M_9=\left( \begin{array}{cc}
-4&5\\
5&-4\end{array} \right).\]

Following \cite{belcastro} we recall that $T_{p,q,r}$ with $p,q,r\in \ZZ$ 
is the lattice determined by a graph which has the form of a T, 
and $p,q,r$ are the respective lengths of the three legs. 
The rank of $T_{p,q,r}$ is $p+q+r-2$ and the discriminant group has order $pqr-pq-qr-pr$.

Given a lattice $L$, we denote by $L(n)$, the lattice with the same rank as $L$, but whose values under the bilinear form $B$ are multiplied by $n$. 

Many even lattices are uniquely determined by their rank and the discriminant quadratic form. 
To make this statement precise, we introduce the following finite quadratic forms. 
The notation follows \cite{belcastro} and the results are proven in \cite{nikulin}. 

We define three classes of finite quadratic forms forms, $w_{p,k}^\epsilon$, $u_k$, $v_k$ as follows:

\begin{enumerate}
\item For $p\neq 2$ prime, $k\geq 1$ an integer, and $\epsilon\in \set{\pm 1}$, let $a$ be the smallest even integer that has $\epsilon$ as quadratic residue modulo $p$. Then we define $w_{p,k}^\epsilon:\ZZ/{p^k}\ZZ\to \QQ/2\ZZ$ via $w_{p,k}^\epsilon(1)= ap^{-k}$. 

\item For $p=2$, $k\geq 1$ and $\epsilon\in \set{\pm 1, \pm 5}$, we define $w_{2,k}^\epsilon: \ZZ_{2^k}\to \QQ/2\ZZ$ on the generator via $w_{2,k}^\epsilon (1)=\epsilon\cdot 2^{-k}$. 

\item For $k\geq 1$ an integer, we define the forms $u_k$ and $v_k$ on $\ZZ/{2^k}\ZZ\times \ZZ/{2^k}\ZZ$ via the matrices:
\[
u_k=\left(\begin{matrix}
0 & 2^{-k} \\
2^{-k} & 0
\end{matrix}\right) \quad
v_k=2^{-k}\left(\begin{matrix}
2 & 1 \\
1 & 2
\end{matrix}\right)
\]

\end{enumerate}

For example, if we consider the lattice $L=A_2$, then $A_L\cong \ZZ/3\ZZ$ and $q_L$ has value $\tfrac{4}{3}$ on the generator. Thus $q_L\cong \omega_{3,1}^1$.

\begin{thm}[cf. {\cite[Thm. 1.8.1]{nikulin}}]\label{t:relations}
The forms $w_{p,k}^\epsilon$, $u_k$, $v_k$ generate the semi--group of finite quadratic forms. 
\end{thm}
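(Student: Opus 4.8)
The plan is to prove the stronger structural statement that every non-degenerate finite quadratic form $(A,q)$ is isomorphic to an orthogonal direct sum of the listed generators; ``generate the semi-group'' is then immediate, since the semi-group operation is the orthogonal direct sum $\oplus$. I work throughout with the associated bilinear form $b\colon A\times A\to\QQ/\ZZ$, recording first the relation $2q(x)\equiv 2b(x,x)\imod{2\ZZ}$, so that $b(x,x)$ is the reduction of $q(x)$ modulo $\ZZ$.

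The one lemma that drives the whole argument is the following: if $H\leq A$ is a subgroup on which $b$ restricts non-degenerately (that is, $h\mapsto b(h,-)|_H$ is an isomorphism $H\xrightarrow{\sim}\Hom(H,\QQ/\ZZ)$), then $A=H\oplus H^\perp$ is an orthogonal decomposition and both $q|_H$ and $q|_{H^\perp}$ are again non-degenerate finite quadratic forms. Indeed $H\cap H^\perp$ is the radical of $b|_H$, hence $0$; and for any $a\in A$ non-degeneracy produces $h_0\in H$ with $b(a-h_0,-)|_H=0$, so $a-h_0\in H^\perp$ and $A=H+H^\perp$. The form splits because $q(h+h')=q(h)+q(h')+2b(h,h')=q(h)+q(h')$ for $h\in H$, $h'\in H^\perp$. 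Applying this with $H=A_p$ the $p$-Sylow subgroup, which is $b$-orthogonal to every $A_{p'}$ with $p'\neq p$ since elements of coprime order pair to $0$ in $\QQ/\ZZ$, reduces the problem at once to the case in which $A$ is a finite abelian $p$-group.

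It then suffices, by induction on $|A|$, to split off a single generator, i.e. to exhibit a non-degenerate summand $H$ with $q|_H\cong w_{p,k}^\epsilon$, $u_k$, or $v_k$. Let $p^k$ be the exponent of $A$. For $p$ odd I would show one can pick $x$ of order $p^k$ with $b(x,x)$ of exact denominator $p^k$, a unit multiple of $p^{-k}$: were every maximal-order element to have $b(x,x)$ more divisible, $b$ would be degenerate on the top layer. Then $\langle x\rangle\cong\ZZ/p^k\ZZ$ is non-degenerate, and $q|_{\langle x\rangle}$ is $w_{p,k}^{+1}$ or $w_{p,k}^{-1}$ according to the quadratic-residue class modulo $p$ of the numerator of $q(x)$; here one uses that replacing $x$ by a unit multiple rescales that numerator by a square, so only the residue class is an invariant. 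For $p=2$ the same search either yields a maximal-order $x$ with $b(x,x)$ of exact denominator $2^k$, giving a summand $w_{2,k}^\epsilon$ with $\epsilon\in\set{\pm1,\pm5}$ the class of the numerator modulo $8$; or, when no such $x$ exists, it yields two maximal-order elements spanning a non-degenerate rank-$2$ group isomorphic to $u_k$ or $v_k$, the two even indecomposable $2$-adic blocks. In each case the summand lemma peels the block off and the induction continues on $H^\perp$.

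The main obstacle is precisely this existence claim: that $A$ always admits a maximal-order element (odd $p$), or a maximal-order element or pair ($p=2$), carrying a non-degenerate form, which is the torsion incarnation of the Jordan splitting of quadratic forms over $\ZZ_p$. For odd $p$ this is comparatively soft because $2$ is invertible and one may complete the square, diagonalizing the top layer. For $p=2$ it is genuinely delicate: diagonalization can fail, so one must analyze the parity of $q$ on the maximal-order elements to decide between a diagonal block $w_{2,k}^\epsilon$ and an even block $u_k$ or $v_k$, and---more subtly---arrange the chosen summand to be orthogonal to the lower layers before invoking the lemma. I emphasize that this establishes only the existence of a decomposition into generators; the relations among the generators, that is, the non-uniqueness of the normal form, are a separate matter, handled by the companion identities in \cite{nikulin} rather than by this theorem.
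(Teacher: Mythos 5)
This theorem is not proved in the paper at all: it is quoted from Nikulin (Thm.\ 1.8.1 there), where the argument runs through lattices rather than finite groups --- every non-degenerate finite quadratic form on a $p$-group is realized as the discriminant form of an even $\ZZ_p$-lattice, and the Jordan decomposition of such lattices into rank-one modular blocks (plus, for $p=2$, the even rank-two blocks) is what produces the generators $w_{p,k}^\epsilon$, $u_k$, $v_k$. Your proposal is the genuinely different ``torsion-side'' route: orthogonal splitting lemma, Sylow reduction, induction on $|A|$. That skeleton is sound --- the splitting lemma and its proof are correct, the Sylow reduction is correct --- but already for odd $p$ your justification of the key existence claim is too quick. ``Were every maximal-order element to have $b(x,x)$ more divisible, $b$ would be degenerate on the top layer'' is the statement to be proved, not a proof; the actual argument is the polarization trick: non-degeneracy gives $x$ of maximal order $p^k$ and some $y$ with $b(x,y)$ of order $p^k$ (else $p^{k-1}x$ lies in the radical), whence $y$ also has order $p^k$, and if both $b(x,x)$ and $b(y,y)$ have order less than $p^k$, then $b(x+y,x+y)=b(x,x)+2b(x,y)+b(y,y)$ has order exactly $p^k$ because $2$ is a unit mod $p$.

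The genuine gap is the case $p=2$, which you flag yourself and which is not a peripheral technicality: it is where essentially all the content of the theorem lives. Two steps are missing. (i) The dichotomy: when no maximal-order $x$ has $b(x,x)$ of order $2^k$, you must actually produce the rank-two block. This part is fillable in a few lines: take $x,y$ of order $2^k$ with $b(x,y)=\beta/2^k$, $\beta$ odd; by hypothesis $b(x,x)=\alpha/2^k$ and $b(y,y)=\gamma/2^k$ with $\alpha,\gamma$ even; then $\alpha\gamma-\beta^2$ is odd, hence a unit mod $2^k$, which forces $\langle x,y\rangle\cong(\ZZ/2^k\ZZ)^2$ with $b$ non-degenerate on it and with $q$ taking only values of the form $2c/2^k$ there. (ii) The identification: you then assert that this even block is $u_k$ or $v_k$ by calling them ``the two even indecomposable $2$-adic blocks'' --- but that is precisely the classification fact the theorem encodes, and it is the step that genuinely requires work (normalize $\beta=1$ by rescaling $y$, then a change-of-basis computation in $\QQ/2\ZZ$, solving $t^2\alpha+2t+\gamma\equiv 0$ modulo $2^{k+1}$ when possible, reduces the block to the Gram matrix of $u_k$ or of $v_k$). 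As written, your proposal is a correct outline whose $2$-adic heart is deferred; until step (ii) is carried out it does not yet constitute a proof of the theorem.
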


In other words every finite quadratic form can be written (not uniquely) 
as a direct sum of the generators $w_{p,k}^\epsilon$, $u_k$, $v_k$. 
Relations can be found in \cite[Thm. 1.8.2]{nikulin}.

For a finite quadratic form $q$, and a prime number $p$, 
we denote $q$ restricted to the $p$--component $(A_q)_p$ of $A$ by $q_p$. 
The following results describe the close link between discriminant quadratic forms and even lattices. 

\begin{thm}[cf. {\cite[Thm. 1.13.2]{nikulin}}]\label{t:lattice_unique}
An even lattice $S$ with invariants $(t_+,t_-,q)$ is unique if, simultaneously, 
\begin{enumerate}
\item $t_+\geq 1, t_-\geq 1, t_++t_-\geq 3$;
\item for each $p\neq 2$, either $\rk S \geq 2+l((A_q)_p)$ or $
q_p\cong w_{p,k}^\epsilon\oplus w_{p.k}^{\epsilon'}\oplus q_p'$;

\item for $p= 2$, either $\rk S \geq 2+l((A_q)_2)$ or one of the following holds
\[
q_2\cong u_k\oplus q_2',\quad
q_2\cong v_k\oplus q_p',\quad
q_2\cong w_{2,k}^\epsilon\oplus w_{2.k}^{\epsilon'}\oplus q_2'.
\]

\end{enumerate}
\end{thm}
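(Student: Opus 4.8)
The plan is to recognize this as the classical statement that, under the stated hypotheses, the genus of $S$ consists of a single isomorphism class, and to prove it in three stages: first reduce isomorphism to membership in a common genus, then reduce the genus to a count of spinor genera, and finally show that the local hypotheses force this count to be $1$.

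First I would invoke the fact (established earlier in \cite{nikulin}) that the genus of an even lattice is completely determined by its signature $(t_+,t_-)$ together with its discriminant quadratic form $q$; equivalently, two even lattices with the same invariants $(t_+,t_-,q)$ are locally isometric at every prime and at the archimedean place, and hence lie in the same genus. Thus it suffices to show that this common genus contains a unique class. At this point condition (1) enters decisively: it guarantees that $S$ is indefinite of rank $\geq 3$, which is exactly the hypothesis of Eichler's strong approximation theorem for spin groups. That theorem asserts that for such lattices two forms in the same genus are isometric if and only if they lie in the same spinor genus. Hence the desired uniqueness is equivalent to the assertion that the genus of $S$ comprises a single spinor genus.

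The number of proper spinor genera in the genus equals the index $[\mathbb{A}^\times : \QQ^\times \prod_v \theta(O^+(S_v))]$, where $v$ runs over all places of $\QQ$, $\theta$ denotes the spinor norm, and $S_v$ is the localization. So the heart of the argument is a purely local computation: I would show that conditions (1)--(3) force $\theta(O^+(S_\infty)) = \RR^\times$ and $\theta(O^+(S_p)) \supseteq \ZZ_p^\times$ for every prime $p$. Since $\QQ$ has class number one, one has $\mathbb{A}^\times = \QQ^\times\cdot\bigl(\RR^\times \times \prod_p \ZZ_p^\times\bigr)$, so these local statements make the index equal to $1$ and yield a single spinor genus. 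Indefiniteness (condition (1)) gives the archimedean statement at once, since the spinor norm of an indefinite real orthogonal group is all of $\RR^\times$.

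For the finite primes, the strategy is to read off the local spinor norm from the $p$-adic Jordan decomposition of $S_p$, which is encoded by $q_p$. The first alternative in (2) and (3), $\rk S \geq 2 + l((A_q)_p)$, means that the unimodular (scale-one) Jordan block has rank at least $2$, which is well known to make $\theta(O^+(S_p))$ contain $\ZZ_p^\times$. The second alternatives --- the presence of $w_{p,k}^\epsilon \oplus w_{p,k}^{\epsilon'}$ for odd $p$, or of $u_k$, $v_k$, or $w_{2,k}^\epsilon \oplus w_{2,k}^{\epsilon'}$ for $p=2$ --- each signal two generators of the discriminant at a common scale $p^k$, equivalently a Jordan block of rank $\geq 2$ at that scale, and a direct computation of rotations inside such a block shows the spinor norm again swallows $\ZZ_p^\times$. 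Assembling these local facts gives a single spinor genus and hence uniqueness. I expect the main obstacle to be precisely this local spinor-norm calculation at $p=2$: the dyadic theory is genuinely more intricate, since the forms $u_k$ and $v_k$ and the four choices of $\epsilon$ for $w_{2,k}^\epsilon$ must all be treated separately, and the relations among $2$-adic Jordan forms (cf. Theorem~\ref{t:relations}) mean one must verify that the enumerated cases really exhaust every situation in which the spinor norm could fail to be large.
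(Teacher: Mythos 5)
The paper offers no proof of this statement: it is imported verbatim (``cf.'') from Nikulin, so the only fair comparison is with Nikulin's own argument --- and your outline is essentially that argument. The chain genus $=$ (signature, discriminant form), then Eichler's strong approximation theorem for indefinite lattices of rank $\geq 3$ reducing class number to spinor-genus number, then local spinor-norm computations read off the Jordan splitting (with $\rk S\geq 2+l((A_q)_p)$ interpreted as a rank-$\geq 2$ unimodular Jordan block, and the alternatives in (2)--(3) as a rank-$\geq 2$ block at scale $p^k$), is exactly the Eichler--Kneser route that Nikulin invokes, so your plan is the standard proof rather than a new one.

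One concrete correction to your local step at $p=2$: it is \emph{not} true that each enumerated dyadic alternative forces $\theta(O^+(S_2))\supseteq \ZZ_2^\times$. For instance, if the relevant block is odd of type $w_{2,k}^1\oplus w_{2,k}^1$ (e.g.\ a scaled $\langle 1,1\rangle$), the represented unit classes are only $\set{1,5}$ modulo squares, so the spinor norm of that block has unit part $\set{1,5}\subsetneq \ZZ_2^\times/(\ZZ_2^\times)^2$. The conclusion nevertheless holds, but only via the full adelic index $[\,\mathbb{A}^\times : \QQ^\times\prod_v\theta(O^+(S_v))\,]$: the global element $-1\in\QQ^\times$ (a unit at every finite place, absorbed at infinity because indefiniteness gives $\theta(O^+(S_\infty))=\RR^\times$) carries $3\mapsto -3\equiv 5$ and $7\mapsto -7\equiv 1 \pmod 8$, so the index is still $1$ with only $\set{1,5}$ available at $2$. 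So your proposed simplification ``show $\theta_p\supseteq\ZZ_p^\times$ at every $p$'' must be weakened to ``show $\theta_p$ is large enough that, together with $\QQ^\times$ and $\theta_\infty=\RR^\times$, the index is $1$'' --- which is precisely the dyadic bookkeeping you correctly anticipated as the main obstacle, and which requires checking all $\epsilon$-combinations for $w_{2,k}^\epsilon\oplus w_{2,k}^{\epsilon'}$ as well as $u_k$ and $v_k$ (the latter two do give all of $\ZZ_2^\times$, via rotations $\mathrm{diag}(t,t^{-1})$ of the hyperbolic plane and via the unramified norm form, respectively).
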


\begin{cor}[cf. {\cite[Corollary 1.13.3]{nikulin}}]\label{c:latunique}
An even lattice $S$ with invariants $(t_+,t_-,q)$ exists and is unique 
if $t_+ -t_-\equiv \sign q \imod 8$, $t_+ +t_-\geq 2+ l(A_q)$, and $t_+, t_- \geq 1$. 
\end{cor}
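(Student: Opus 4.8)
The statement bundles an existence claim with a uniqueness claim, and the plan is to treat the two separately, obtaining uniqueness from Theorem~\ref{t:lattice_unique} and existence by a direct construction. For \emph{uniqueness}, I would check that the three hypotheses force the hypotheses of Theorem~\ref{t:lattice_unique}. The assumption $t_+, t_-\geq 1$ is precisely part of condition~(1) there. Because the $p$-length never exceeds the total length, $l((A_q)_p)\leq l(A_q)$ for every prime $p$, the bound $\rk S = t_+ + t_- \geq 2 + l(A_q)$ yields $\rk S \geq 2 + l((A_q)_p)$ for every $p$; hence the first alternative in each of conditions~(2) and~(3) holds automatically and those conditions are vacuous. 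The only clause not implied directly is $t_+ + t_- \geq 3$ in~(1), which can fail only if $l(A_q)=0$ and $t_+=t_-=1$. In that case $q$ is trivial and $S$ is an even unimodular lattice of signature $(1,1)$, hence isometric to $U$ by the classification of indefinite even unimodular lattices. So $S$ is unique in every case.

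For \emph{existence}, Theorem~\ref{t:lattice_unique} gives nothing, so I would build $S$ explicitly. Using Theorem~\ref{t:relations} I would first write $q \cong \bigoplus_i q_i$ with each $q_i$ one of the generators $w_{p,k}^\epsilon$, $u_k$, $v_k$, and realize the summands as discriminant forms of small even lattices: for instance $u_k$ is the discriminant form of $U(2^k)$ and each $w_{2,k}^\epsilon$ that of a rank-one lattice $\langle \pm 2^k\rangle$, while the remaining generators need a little more care (e.g.\ $v_1$ is realized by $D_4$, in rank four). This produces an even lattice $K_0$ with $q_{K_0}\cong q$ and some signature $(a_+,a_-)$ satisfying $a_+ - a_- \equiv \sign q \imod 8$ by the Milgram--Gauss formula. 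I would then adjust the signature to $(t_+,t_-)$ by taking orthogonal direct sums with copies of $U$, which add $(1,1)$ and preserve $q$, and of $E_8$ and $E_8(-1)$, which shift the signature by $(0,8)$ and $(8,0)$ and also preserve $q$. The congruence $t_+ - t_- \equiv \sign q \imod 8$ guarantees $(t_+ - t_-)-(a_+ - a_-)\equiv 0 \imod 8$, so the needed shift is realizable, and $t_+ + t_- \geq 2 + l(A_q)$ leaves room for at least one hyperbolic summand, forcing $t_\pm\geq 1$ in the end.

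The hard part is the rank bookkeeping in the existence step. Realizing the odd part of $q$ by even lattices is not free: in rank one evenness forces a factor $2$ into the determinant, so a naive summand-by-summand construction introduces spurious $2$-adic discriminant contributions and may exceed the budget $2 + l(A_q)$; already $v_k$ is not the discriminant form of any rank-two even lattice with discriminant group $(\ZZ/2^k\ZZ)^2$. The real work is to show that $q$ can be realized by an even lattice of rank at most $2 + l(A_q)$, which means using the relations among the generators from \cite[Thm.~1.8.2]{nikulin} to cancel the extraneous local parts against the genuine $2$-component and the one spare hyperbolic plane. Conceptually this is the local--global assertion that the genus with invariants $(t_+,t_-,q)$ is nonempty, whose only global obstruction is the Milgram signature congruence captured by the mod-$8$ hypothesis; by contrast the uniqueness half and the signature arithmetic are routine.
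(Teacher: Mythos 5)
Your uniqueness half is correct and complete. The hypotheses give $\rk S = t_+ + t_- \geq 2 + l(A_q) \geq 2 + l((A_q)_p)$ for every prime $p$, so conditions (2) and (3) of Theorem~\ref{t:lattice_unique} hold via their first alternatives, and your handling of the edge case $t_+ = t_- = 1$ (then $l(A_q)=0$, so $S$ is even, unimodular, indefinite of rank $2$, hence $S\cong U$) is exactly the right way to repair the missing inequality $t_+ + t_- \geq 3$. For context: the paper itself offers no proof of this corollary at all --- it is imported from Nikulin, where it is deduced from the uniqueness theorem you used \emph{together with} Nikulin's existence theorem (Theorem 1.10.1 of that paper).

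The genuine gap is the existence half, and you have in effect conceded it yourself: you state that "the real work" is to realize $q$ by an even lattice within the rank budget, and you do not do that work. Moreover, the construction you sketch cannot be completed by the "cancellation" you propose. Discriminant groups only grow under orthogonal direct sum ($A_{L\oplus L'} \cong A_L \oplus A_{L'}$), so once a summand-by-summand realization introduces a spurious $2$-adic factor (unavoidable for $w_{p,k}^\epsilon$ with $p$ odd in rank one, since an even rank-one lattice has even determinant), no further summands and no relations among finite quadratic forms can remove it --- the relations of Nikulin's Theorem 1.8.2 identify different presentations of the \emph{same} form, they do not cancel factors of the discriminant group. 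In addition, padding by $U$, $E_8$ and $E_8(-1)$ is monotone in both signature components, so an initial realization that overshoots the target in either $t_+$ or $t_-$ (not just in total rank) can never be corrected. What is actually required is the nonemptiness of the genus with invariants $(t_+,t_-,q)$ under the stated hypotheses, i.e.\ Nikulin's Theorem 1.10.1, whose proof rests on the local--global theory of genera (local existence plus the product formula for Hasse invariants, with Milgram's congruence as the only global obstruction). That result is not derivable from Theorem~\ref{t:relations} and Theorem~\ref{t:lattice_unique}, which are the only tools the paper provides; so as written your proposal proves uniqueness but assumes existence, and to close it you must either cite Theorem 1.10.1, as the paper implicitly does, or reprove that local--global statement.
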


\begin{cor}[cf. {\cite[Corollary 1.13.4]{nikulin}}]\label{c:U+T}
Let $S$ be an even lattice of signature $(t_+,t_-)$. If $t_+\geq 1$, $t_-\geq 1$ and $t_+ +t_-\geq 3+l(A_S)$, then $S\cong U
\oplus T$ for some lattice $T$. 
\end{cor}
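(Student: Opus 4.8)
The plan is to prove this by the existence--plus--uniqueness method: I would manufacture a lattice of the shape $U\oplus T$ carrying exactly the same invariants as $S$, and then invoke the uniqueness of $S$ within its genus to conclude $S\cong U\oplus T$.

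\emph{Step 1 (uniqueness of $S$).} First I would record that $S$ is determined up to isometry by the triple $(t_+,t_-,q_S)$. Being even, $S$ automatically satisfies the congruence $t_+-t_-\equiv\sign q_S\imod 8$; moreover $t_+,t_-\geq 1$ and $t_++t_-\geq 3+l(A_S)\geq 2+l(A_S)$ by hypothesis. Hence Corollary~\ref{c:latunique} applies to $S$ itself and shows that \emph{any} even lattice with invariants $(t_+,t_-,q_S)$ is isometric to $S$.

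\emph{Step 2 (the candidate splitting).} The target is an even lattice $T$ of signature $(t_+-1,t_--1)$ with $q_T\cong q_S$. Granting such a $T$, the orthogonal direct sum $U\oplus T$ is even, has signature $(1,1)+(t_+-1,t_--1)=(t_+,t_-)$, and, since $U$ is unimodular (so $A_U=0$), satisfies $A_{U\oplus T}=A_T$ and $q_{U\oplus T}=q_T\cong q_S$. Thus $U\oplus T$ and $S$ share the triple $(t_+,t_-,q_S)$, and Step~1 forces $S\cong U\oplus T$, which is the claim.

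\emph{Step 3 (the obstacle: existence of $T$).} This is the step I expect to be hardest. The signature condition for $T$ matches that of $S$, since $t_+-1-(t_--1)=t_+-t_-\equiv\sign q_S\imod 8$ already holds. The delicate point is the rank: here $\rk T=t_++t_--2\geq 1+l(A_S)$, which is \emph{one short} of the bound $2+l(A_S)$ demanded in Corollary~\ref{c:latunique}, so I cannot simply quote that corollary to construct $T$. Instead I would appeal to Nikulin's finer local--global existence criterion in \cite{nikulin}, whose hypothesis is only $\rk T\geq l(A_{q_S})$ together with the signature and $p$-adic conditions; all of these are inherited from the fact that $q_S$ is already realized globally by $S$. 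The degenerate cases $t_+=1$ or $t_-=1$, where $T$ becomes definite, must be checked separately, but nonemptiness of the relevant genus is again guaranteed by the inherited congruence $t_\pm-1\equiv\sign q_S\imod 8$ and local realizability.

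Finally, as a more hands-on alternative to the abstract construction of $T$, I would keep in reserve the direct approach: since a nondegenerate unimodular sublattice always splits off as an orthogonal summand, it suffices to embed $U$ in $S$, after which $T=U^\perp_S$ does the job. Embedding $U$ reduces to finding a primitive isotropic $e\in S$ with $\langle e,S\rangle=\ZZ$, choosing $f_0$ with $\langle e,f_0\rangle=1$, and correcting it to $f=f_0-\tfrac{1}{2}\langle f_0,f_0\rangle e$ (integral by evenness), so that $\langle e,f\rangle=1$ and $\langle f,f\rangle=0$ span a copy of $U$. This route trades the existence of $T$ for the existence of a suitable isotropic vector, which is exactly where indefiniteness ($t_+,t_-\geq 1$) and the bound $\rk S\geq 3+l(A_S)$ do the real work.
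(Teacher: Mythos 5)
Your proposal is correct and coincides with the standard argument: the paper offers no proof of this statement, citing Nikulin's Corollary 1.13.4, whose proof is exactly your existence-plus-uniqueness scheme --- uniqueness of $S$ from Corollary~\ref{c:latunique} (its hypotheses hold since $t_++t_-\geq 3+l(A_S)\geq 2+l(A_S)$ and the signature congruence is automatic by Milgram), and existence of $T$ with invariants $(t_+-1,t_--1,q_S)$ from Nikulin's finer existence criterion (Theorem 1.10.1), where $\rk T\geq 1+l(A_S)>l(A_S)$ makes the extra local conditions vacuous and no separate treatment of the definite cases $t_\pm=1$ is actually needed. You correctly identified the one delicate point --- that the rank of $T$ falls one short of the bound in Corollary~\ref{c:latunique} --- and resolved it the same way Nikulin does.
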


In Table \ref{tab-forms}, we list the discriminant form associated to each of the lattices a
ppearing in our calculations (see Sections \ref{sec-ex},\ref{sec:tables}).
A complete description can be found in \cite[Appendix A]{belcastro}.

\begin{table}[h!]\centering
\begin{tabular}{ l c c || l c c|| l c c}
 $L$&$\sign L$ &$q_L$& $L$&$\sign L$ &$q_L$& $L$&$\sign L$ &$q_L$\\
  \hline 
  $U$ 		&(1,1)	& trivial   			&$D_6$		&(0,6)	& $(w_{2,1}^1)^2$	& $T_{4,4,4}$	&(1,9)	& $v_2$\\
  $U(2)$		&(1,1)	& $u$ 			&$D_9$		&(0,9)	& $w_{2,2}^{-1}$ 	& $T_{3,4,4}$	&(1,8)	& $w_{2,3}^5$\\
  $A_1$ 		&(0,1) 	& $w^{-1}_{2,1}$ 	&$E_6$		&(0,6)	& $w_{3,1}^{-1}$	&  $T_{2,5,6}$	&(1,10)	& $w_{2,3}^{-5}$\\
  $A_2$		&(0,2)	& $w_{3,1}^{1}$ 	&$E_7$		&(0,7)	& $w_{2,1}^{1}$   	&   $<2>$		&(1,0)	& $w_{2,1}^1$\\
  $A_3$		&(0,3)	& $w_{2,2}^5$		&$E_8$		&(0,8)	& trivial			&  $<4>$		&(1,0)	& $w_{2,2}^1$\\
  $A_1(2)$		&(0,1)	& $w_{2,2}^{-1}$ 	&$H_5$		&(1,1)	& $w_{5,1}^{-1}$	&  $<8>$		&(1,0)	& $w_{2,3}^1$\\
  $D_4$		&(0,4)	& $v$ 			&$L_9$		&(1,1)	& $w_{3,2}^1$		&  $<-8>$		&(0,1)	& $w_{2,3}^{-1}$\\
  $D_5$		&(0,5)	& $w_{2,2}^{-5}$	&$M_9$		&(1,1)	& $w_{3,2}^{-1}$	&	&	&\\
 \end{tabular}
\caption{Lattices and forms} \label{tab-forms}
\end{table}

Let $L'$ be an overlattice $L'$ of the lattice $L$. 
We call $H_{L'}:=L'/L$. 
By the chain of embeddings $L\subset L'\subset (L')^*\subset L^*$ 
one has $H_{L'}\subset A_L$ and $A_{L'}=((L')^*/L)/H_{L'}$. 

\begin{prop}[cf. {\cite[Prop. 1.4.1]{nikulin}}]\label{p:overl}
The correspondence $L'\leftrightarrow H_{L'}$ is a 1:1 correspondence 
between overlattices of finite index of $L$ and $q_L$-isotopic subgroups of $A_L$,
i.e. subgroups on which the form $q_L$ is 0.
Moreover, $H_{L'}^\perp=(L')^*/L$ and $q_{L'}=({q_L}_{|H_{L'}^\perp})/H_{L'}$.
\end{prop}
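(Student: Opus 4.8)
The plan is to realize every even overlattice $L'$ of $L$ inside the dual lattice $L^*$ and to match it with the isotropic subgroup it cuts out in $A_L = L^*/L$; throughout I assume $L$ is even and that ``overlattice'' means ``even overlattice'', since the isotropy is taken with respect to $q_L$ rather than merely $b$. First I would show that an even overlattice $L'$ yields a $q_L$-isotropic subgroup. Because $L \subset L'$ is an isometric inclusion of finite index, $L'\otimes\QQ = L\otimes\QQ$ carries a common $\QQ$-valued form; for $x\in L'$ the functional $y\mapsto B(x,y)$ is integral on $L$ (as $L\subset L'$ and $L'$ is integral), so $x\in L^*$. Hence $L\subset L'\subset L^*$ and $H_{L'}:=L'/L$ is a subgroup of $A_L$. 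Evenness of $L'$ means $B(x,x)\in 2\ZZ$ for all $x\in L'$, which is exactly the vanishing of $q_L$ on $H_{L'}$; the relation $q(a+a')-q(a)-q(a')\equiv 2b(a,a')$ then forces $b|_{H_{L'}}=0$ as well.

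Conversely, given a $q_L$-isotropic subgroup $H\subset A_L$, I would set $L':=\pi^{-1}(H)$ for the projection $\pi\colon L^*\to A_L$, so that $L\subset L'\subset L^*$ and $L'/L=H$. Writing $x,x'\in L'$ with classes $\bar x,\bar x'\in H$, the vanishing of $b$ on $H$ (a consequence of isotropy) gives $B(x,x')\in\ZZ$, so $L'$ is integral, and the vanishing of $q_L$ on $H$ gives $B(x,x)\in 2\ZZ$, so $L'$ is even. The two constructions are manifestly inverse to each other, which establishes the asserted $1{:}1$ correspondence.

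For the final assertion I would compute the dual of $L'$. Any functional integral on $L'$ is a fortiori integral on the smaller lattice $L$, so $(L')^*\subset L^*$; an element $y\in L^*$ then lies in $(L')^*$ iff $B(y,x)\in\ZZ$ for every $x\in L'$, i.e. iff $b(\bar y,\bar x)=0$ in $\QQ/\ZZ$ for all $\bar x\in H_{L'}$, which says precisely $\bar y\in H_{L'}^\perp$. Thus $(L')^*/L=H_{L'}^\perp$, whence $A_{L'}=(L')^*/L'=H_{L'}^\perp/H_{L'}$. Finally, for $\bar y\in H_{L'}^\perp$ and $\bar h\in H_{L'}$ one has $q_L(\bar y+\bar h)=q_L(\bar y)+q_L(\bar h)+2b(\bar y,\bar h)=q_L(\bar y)$, so $q_L|_{H_{L'}^\perp}$ descends to $H_{L'}^\perp/H_{L'}$ and coincides with $q_{L'}$.

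The slightly delicate point, and the place demanding care, is keeping the three pairings distinct: the $\QQ$-valued extension of $B$ to $L^*$, its reduction $b$ to $\QQ/\ZZ$, and the quadratic refinement $q_L$ to $\QQ/2\ZZ$. The crux is the equivalence ``$L'$ is even and integral'' $\iff$ ``$H_{L'}$ is $q_L$-isotropic'', which rests on the compatibility relation between $q$ and $b$ recorded earlier; the rest is bookkeeping along the chain $L\subset L'\subset (L')^*\subset L^*$.
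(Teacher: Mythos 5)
Your proof is correct. Note that the paper itself offers no proof of this proposition — it is quoted directly from Nikulin (Prop.~1.4.1) — and your argument is precisely the standard one from that source: realize every even overlattice inside $L^*$ via the chain $L\subset L'\subset (L')^*\subset L^*$, match evenness/integrality of $L'$ with vanishing of $q_L$ (and hence of $b$) on $H_{L'}$, and compute $(L')^*$ as the preimage of $H_{L'}^\perp$. You were also right to flag that ``overlattice'' must be read as ``even overlattice'' for the statement to hold as phrased — a non-even integral overlattice would correspond to a subgroup isotropic for $b$ but not for $q_L$ — which is implicit in the paper since all lattices there arise inside the even lattice $H^2(X,\ZZ)$.
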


\subsection{K3 lattices} \label{sec-K3lattice}
Let $X$ be a K3 surface. 
It is well--known that $H^2(X,\ZZ)$ is an even unimodular lattice of signature $(3,19)$.
As such, it is isometric to the \emph{K3-lattice} $\klat = U^3\oplus (E_8)^2$.

We let
\[
S_X = H^2(X,\ZZ) \cap H^{1,1}(X,\CC)
\]
denote the {\em Picard} lattice of $X$ in $H^2(X,\ZZ)$ and $T_X = S_X^\perp$ denote the {\em transcendental lattice}.

Let $\sigma$ be a non-symplectic automorphism of $X$. 
We let $S_X(\sigma)\subseteq H^2(X,\ZZ)$ denote the $\sigma^\ast$-invariant sublattice of $H^2(X,\ZZ)$:
\[S_X(\sigma)=\{x\in H^2(X,\ZZ): \sigma^*x=x\}.\]
One can check that it is a primitive sublattice of $H^2(X,\ZZ)$. In fact, $S_X(\sigma)$ is a primitive sublattice of $S_X$ and in general $S_X(\sigma)\subsetneq S_X$.  
We let $T_X(\sigma) = S_X(\sigma)^\perp$ denote its orthogonal complement. The signature of $S_X(\sigma)$ is $(1,t)$ for some $t\leq 19$, i.e. $S_X(\sigma)$ is hyperbolic.

\section{Mirror symmetry}
\label{sec-mirror}
\subsection{Mirror symmetry for K3 surfaces}
\label{sec-LPK3}
Mirror symmetry for a Calabi--Yau manifold $X$ and its mirror $X^\vee$ can be thought of as an exchanging of the K\"ahler structure on $X$ for the complex structure of $X^\vee$. Thus a first prediction of mirror symmetry is the rotation of the Hodge diamond:
\begin{equation*}
H^{p,q}(X,\CC)\cong H^{q,N-p}(X,\CC) 
\end{equation*}
where $N$ is the dimension of $X$.

For K3 surfaces, however, the Hodge diamond
is symmetric under the rotation mentioned above. So we need to consider a refinement of this idea. This is accomplished by the notion of lattice polarization. Roughly, we choose a primitive lattice $M\hookrightarrow S_X$, which plays the role of the K\"ahler deformations, and the mirror lattice $M^\vee$, which we now define, plays the role of the complex deformations. 
%
We will refer to this formulation of mirror symmetry simply as LPK3 mirror symmetry.  
 
Following \cite{dolgachev}, let $X$ be a K3 surface and suppose that $M$ is a lattice of signature $(1,t)$. If $j\colon M\hookrightarrow S_X$ is a primitive embedding into the Picard lattice of $X$, the pair $(X,j)$ is called an \emph{$M$-polarized K3 surface}.  There is a moduli space of $M$--polarized K3 surfaces with dimension $19-t$. 

We will not be concerned about the embedding. As in \cite{CLPS}, we will call the pair $(X,M)$ an \emph{$M$-polarizable} K3 surface if such an embedding $j$ exists.  Note that for an $M$-polarizable K3 surface $(X,M)$, the lattice $M$ naturally embeds primitively into $\klat$.

\begin{defn}\label{mirror_defn}
Let $M$ be a primitive sublattice of $\klat$ of signature $(1,t)$ with $t\leq 18$ such that $M^\perp_{L_{K3}}\cong U\oplus \mirror{M}$. 
We define $\mirror{M}$ to be (up to isometry) the \emph{mirror lattice} of $M$.\footnote{As in \cite{CLPS}, our definition in this restricted setting is slightly coarser than the one used by Dolgachev in \cite{dolgachev}, since we do not keep track of the embedding $U\hookrightarrow M^\perp$ and instead only consider $M^\vee$ up to isometry.}
\end{defn}

By Theorem \ref{t:lattice_unique} 
this definition is independent of the embedding $M$ into $\klat$. Furthermore under some conditions, (see e.g. Corollary~\ref{c:U+T} and Theorem~\ref{t:lattice_unique}) this definition is also independent of the embedding $U$ into $M^\perp$. One can check that these conditions are satisfied for the lattices we consider here. 

Note that $\mirror{M}$ also embeds primitively into $\klat$ and has signature $(1,18-t)$.  Furthermore, $q_M\cong -q_{\mirror{M}}$. One easily checks that $(\mirror{M})^\perp_{\klat}\cong U\oplus M$. 


Given $(X,M)$ an $M$-polarizable K3 surface and $(X',M')$ an $M'$-polarizable K3 surface, with $M$ and $M'$ primitive sublattices of $S_X$ and $S_{X'}$, resp., 
we say that $(X,M)$ and $(X',M')$ are \emph{LPK3 mirrors} if $M' = \mirror{M}$ (or equivalently $M=\mirror{(M')}$).

Notice that if $M$ has rank $t+1$, then the dimension of the moduli space of $M^\vee$ polarized K3 surfaces is $19-(18-t)$ which agrees with the rank of $M$. Returning to the question of K\"ahler deformations and complex deformations, we see that this definition of mirror symmetry matches the idea behind rotation of the Hodge diamond, as mentioned earlier. 


  \subsection{Quasihomogeneous polynomials and diagonal symmetries}\label{quasi_sec}
We recall a few facts and definitions (cf. \cite{CLPS} for details).
A \emph{quasihomogeneous} map of degree $d$ with integer weights $w_1, w_2, \dots, w_n$
is $W:\CC^n\to \CC$ such that for every $\lambda \in \CC$,
\[
W(\lambda^{w_1}x_1, \lambda^{w_2}x_2, \dots, \lambda^{w_n}x_n) = \lambda^dW(x_1,x_2, \dots, x_n).
\]
One can assume 
$\gcd(w_1, w_2, \dots, w_n)=1$ and say $W$ has the \emph{weight system} $(w_1, w_2, \ldots, w_n; d)$. 
Given a quasihomogeneous polynomial $W:\CC^n  \rightarrow \CC$ with a critical point at the origin,
we say it is \emph{non-degenerate} if the origin is the only critical point of $W$ and
the fractional weights $\frac{w_1}{d}, \ldots, \frac{w_n}{d}$ of $W$ are uniquely determined by $W$.
 
A non-degenerate quasihomogeneous polynomial $W$ (also called \emph{potential} in the literature) is \emph{invertible} if it has the same number of monomials as variables.

If $W$ is invertible we can rescale variables so that $W = \sum_{i=1}^n \prod_{j=1}^n x_j^{a_{ij}}$. 
This polynomial can be represented by the square matrix $\A{W} = (a_{ij})$, 
which we will call the \emph{exponent matrix} of the polynomial. 
Since $W$ is invertible, the matrix $\A{W}$ is an invertible matrix. 

The \emph{group $G_W$ of diagonal symmetries} of an invertible polynomial $W$ is 
\begin{equation*}
\Gmax{W} = \{(c_1, c_2, \ldots, c_n) \in (\CC^*)^n:  W(c_1x_1, c_2x_2, \ldots, c_nx_n) = W(x_1, x_2, \ldots, x_n)\}.
\end{equation*}

Observe that, given $\gamma=(c_1, c_2, \ldots, c_n)\in \Gmax{W}$, 
the $c_i$'s are roots of unity.
Thus one can consider $\Gmax{W}$ as a subgroup of $(\QQ / \ZZ)^n$, using addivite notation and identifying
$(c_1, c_2, \ldots, c_n) = (e^{2 \pi i g_1}, e^{2 \pi i g_2}, \ldots, e^{2 \pi i g_n})$ with $(g_1, g_2, \ldots, g_n)\in(\QQ / \ZZ)^n$.
Observe that the order of $\Gmax{W}$ is $|\Gmax{W}| = \det(\A{W})$.

Since $W$ is quasihomogeneous, the \emph{exponential grading operator} $\J{W} = \left(\frac{w_1}{d}, \frac{w_2}{d}, \ldots, \frac{w_n}{d}\right)$ is contained in $\Gmax{W}$.
We denote by $J_W$ the cyclic group of order $d$ generated by $\J{W}$: $J_W=\inn{\J{W}}$.
Moreover, each $\gamma=(g_1,\dots,g_n)$ define a diagonal matrix and thus 
$G_W$ is embedded in  $\GL_n(\CC)$. 
We define $$\SLgp{W}:=\Gmax{W}\cap \SLn{n},$$ 
i.e. $\gamma=(g_1,\dots,g_n)\in \SLgp{W}$ if and only if $\sum_i g_i\in \ZZ$.
The group $\SLgp{W}$ is called the {\em symplectic group} since, by \cite[Proposition 1]{ABS}, 
an automorphism $\sigma\in \Gmax{W}$ is symplectic if and only if $\det \sigma =1$, that is, if and only if $\sigma\in \SLgp{W}$.

\subsection{K3 surfaces from $(W,G)$}\label{K3_sec}

Reid (in an unpublished work) and Yonemura \cite{yonemura}  
have indipendently compiled a list of the 95 normalized weight systems $(w_1,w_2,w_3,w_4;d)$ 
(``the 95 families'') such that $\PP(w_1,w_2,w_3,w_4)$ 
admits a quasismooth hypersurface of degree $d$ whose minimal resolution is a K3 surface.
We consider one of these weight systems $(w_1,w_2,w_3,w_4;d)$
and an invertible quasihomogeneous polynomial of the form 
\begin{equation}\label{eq-W}
  W=x_1^m+f(x_2,x_3,x_4).  
\end{equation}
Moreover, let $G$ be a group of symmetries such that $J_W\subseteq G \subseteq \SLgp{W}$ 
and let $\widetilde{G}=G/J_W$. 
The polynomial $W$ defines a hypersurface $Y_{W,G}\subset \PP(w_1,w_2,w_3,w_4)/\widetilde{G}$
and one shows that the minimal resolution $X_{W,G}$ of $Y_{W,G}$ is a K3 surface (see \cite{ABS,CLPS}). 

The group $\Gmax{W}$ acts on $Y_{W,G}$ via automorphisms, which extend to automorphisms on the K3 surface $X_{W,G}$. 
The given form of $W$ ensures that the K3 surface $X_{W,G}$ 
admits a purely non-symplectic automorphism of order $m$:
\[
\sigma_m:\pp{x_1}{x_2}{x_3}{x_4}\mapsto \pp{\zeta_mx_1}{x_2}{x_3}{x_4}
\]
where $\zeta_m$ is a primitive $m$-th root of unity.
With additive notation, it is $\sigma_m=\left(\tfrac 1m,0,0,0\right)$.

  \subsection{BHK mirror symmetry}\label{BHK_sec}
Now we can describe the second relevant formulation of mirror symmetry coming from mirror symmetry for Landau--Ginzburg models and which we call BHK (from Berglund-H\"ubsch-Krawitz) mirror symmetry. 
This particular formulation of mirror symmetry was developed initially by Berglund--H\"ubsch in \cite{berghub}, and later refined by Berglund--Henningson in \cite{berghenn} and Krawitz in \cite{krawitz}. Because of the LG/CY correspondence and a theorem from Chiodo--Ruan \cite{BHCR}, this mirror symmetry of LG models can be translated into mirror symmetry for Calabi--Yau varieties (or orbifolds). 

We consider $(W,G)$ with $W$ invertible and $W=\sum_{i=1}^n \prod_{j=1}^n x_j^{a_{ij}}$ 
and define another pair $(W^T,G^T)$, called the BHK mirror.
We first define the polynomial $W^T$ as $$W^T = \sum_{i=1}^n \prod_{j=1}^n x_j^{a_{ji}},$$
i.e. the matrix of exponents of $W^T$ is $A_W^T$.
By the classification of invertible polynomials (cf. \cite[Theorem 1]{KrSk}), 
$W^T$ is invertible.

Next, using additive notation, one defines the dual group $G^T$ of $G$ as 
\begin{equation}\label{dualG_def}
G^T = \set{\; g \in \Gmax{W^T} \; | \;\;g\A{W} h^T \in \ZZ \text{ for all } h \in G \; }.
\end{equation}

The following useful properties of the dual group can be found in \cite[Proposition 3]{ABS}:
\begin{prop}[cf. {\cite[Proposition 3]{ABS}}]
Given $G$ and $G^T$ as before, one has:
\begin{enumerate}
\item $(G^T)^T = G$.
\item If $G_1\subset G_2$, then $G_2^T\subset G_1^T$ and $G_2/G_1\cong G_1^T/G_2^T$.
\item $(\Gmax{W})^T = \triv$, $(\triv)^T = \Gmax{W^T}$.  
\item $(J_W)^T=\SLgp{W^T}$. In particular, if $J_W\subset G$, then $G^T\subset \SLgp{W}$. 
\end{enumerate}
\end{prop}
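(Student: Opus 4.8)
The plan is to recognize the defining relation $g\A{W}h^{T}\in\ZZ$ as a perfect pairing between $\Gmax{W}$ and $\Gmax{W^{T}}$, after which all four assertions become formal facts about annihilators under such a pairing. First I would record that, writing $\gamma\in\Gmax{W}$ additively as $g=(g_{1},\dots,g_{n})$, invariance of each monomial $\prod_{j}x_{j}^{a_{ij}}$ amounts to $\sum_{j}a_{ij}g_{j}\in\ZZ$, so that
\[
\Gmax{W}=\set{g\in(\QQ/\ZZ)^{n} : \A{W}g^{T}\in\ZZ^{n}}=\A{W}^{-1}\ZZ^{n}/\ZZ^{n}.
\]
Because $\A{W}$ is invertible, the same description holds for $\Gmax{W^{T}}$ with $\A{W}^{T}$ in place of $\A{W}$.

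Next I would introduce the pairing $\inn{g,h}:=g\A{W}h^{T}\bmod\ZZ$ for $g\in\Gmax{W^{T}}$ and $h\in\Gmax{W}$ and verify it is well defined: altering $g$ or $h$ by an element $m,m'\in\ZZ^{n}$ changes the value by $m\A{W}h^{T}$ or $g\A{W}m'^{T}$, both integers since $\A{W}h^{T}\in\ZZ^{n}$ and $g\A{W}\in\ZZ^{n}$ (the latter because $\A{W}^{T}g^{T}\in\ZZ^{n}$). It is non-degenerate: if $\inn{g,h}=0$ for all $h$, then choosing $h$ with $h^{T}=\A{W}^{-1}e_{j}$ yields $g_{j}\in\ZZ$ for each $j$, so $g=0$ in $\Gmax{W^{T}}$. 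Since $|\Gmax{W}|=|\Gmax{W^{T}}|=\abs{\det\A{W}}$, the pairing is perfect, identifying $\Gmax{W^{T}}\cong\Hom(\Gmax{W},\QQ/\ZZ)$ and realizing $G^{T}$ as the annihilator $G^{\perp}$ of $G$.

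From here properties (1)--(3) are formal. For (3), $(\Gmax{W})^{T}=\Gmax{W}^{\perp}=\triv$ by non-degeneracy, and $(\triv)^{T}=\Gmax{W^{T}}$ since everything pairs trivially with $0$. For (1), I would note that the pairing used to form $(G^{T})^{T}$ is $g'\A{W}^{T}h'^{T}=h'\A{W}g'^{T}$, i.e. the transpose of the original, so $(G^{T})^{T}=(G^{\perp})^{\perp}$; the double-annihilator identity for a perfect pairing of finite abelian groups then gives $(G^{\perp})^{\perp}=G$ (the inclusion $G\subseteq(G^{\perp})^{\perp}$ is clear, and equality follows by an order count via $|G^{\perp}|=|\Gmax{W^{T}}|/|G|$). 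For (2), order-reversal $G_{2}^{T}\subset G_{1}^{T}$ is immediate from the definition of annihilator, and the pairing descends to a perfect pairing $G_{2}/G_{1}\times G_{1}^{T}/G_{2}^{T}\to\QQ/\ZZ$, whence $G_{2}/G_{1}\cong G_{1}^{T}/G_{2}^{T}$.

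Property (4) is the only place the geometry enters. I would compute the $i$-th entry of $\A{W}\J{W}^{T}$ as $\tfrac{1}{d}\sum_{j}a_{ij}w_{j}=1$, using that quasihomogeneity of degree $d$ forces $\sum_{j}a_{ij}w_{j}=d$ for every monomial; hence $\A{W}\J{W}^{T}=(1,\dots,1)^{T}$ and $\inn{g,\J{W}}=\sum_{i}g_{i}\bmod\ZZ$. As $J_{W}=\inn{\J{W}}$ is cyclic, $(J_{W})^{T}=\set{g\in\Gmax{W^{T}}:\sum_{i}g_{i}\in\ZZ}=\SLgp{W^{T}}$ by the characterization of the symplectic group, and the in-particular clause follows by combining (2) and (4), since $J_{W}\subset G$ forces $G^{T}\subset(J_{W})^{T}=\SLgp{W^{T}}$. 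The only genuine obstacle is the first step: establishing that $g\A{W}h^{T}\bmod\ZZ$ is a well-defined, perfect pairing and identifying $G^{T}=G^{\perp}$; once this is in hand, (1)--(3) are pure annihilator duality and (4) reduces to the one-line weight computation.
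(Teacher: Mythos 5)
Your proof is correct. Note that the paper does not actually prove this proposition internally---it is quoted from \cite{ABS}*{Proposition 3}---so there is no in-paper argument to compare against; your route (recognizing $G^T$ as the annihilator of $G$ under the well-defined perfect pairing $\inn{g,h}=g\A{W}h^T \bmod \ZZ$ between $\Gmax{W^T}$ and $\Gmax{W}$, deducing (1)--(3) by formal annihilator duality, and obtaining (4) from the quasihomogeneity identity $\A{W}\J{W}^T=(1,\dots,1)^T$) is the standard argument for these duality facts in the literature, and every step checks out, including the order counts that upgrade non-degeneracy to perfection and the double-annihilator identity used for (1) and for the non-degeneracy of the descended pairing in (2).
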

Given the pair $(W,G)$ with $W$ invertible with respect to one of the 95 weight systems, 
we associated to it the K3 surface $X_{W,G}$.
One can  check that in this case the weight system of $W^T$ also belongs to the 95.
By the previous result, $J_{W^T}\subseteq G^T\subseteq \SL_{W^T}$, so that $X_{W^T,G^T}$ is again a K3 surface. 
We call $X_{W^T,G^T}$ the \emph{BHK mirror of $X_{W,G}$}.

\subsection{Main theorem}\label{main_sec}
We have  described two kinds of mirror symmetry for K3 surfaces: LPK3 mirror symetry and BHK one. 
Since mirror symmetry describes a single physical phenomenon, 
we expect the two constructions to be compatible in situations where both apply. 
We will now state our main theorem, which shows that BHK and LPK3 mirror symmetry agree for the K3 surfaces $X_{W,G}$, when $W$ is of the form \eqref{eq-W}. 
When no confusion arises, we will denote the mirror K3 surfaces $X_{W,G}$ and $X_{W^T,G^T}$ simply by $X$ and $X^T$. 

Consider the data $(W,G,\sigma_m)$, 
where \begin{itemize}
\item $W$ is an invertible polynomial of the form \eqref{eq-W} 
whose weight system belong to the 95 families of Reid and Yonemura, 
\item $\sigma_m=(\frac 1m,0,0,0)$ is the non-symplectic automorphism of order $m$, 
\item $G$ is a group of diagonal symmetries of $W$ such that $J_W\subseteq G\subseteq \SL_W$.\end{itemize}
By section \ref{sec-K3lattice}, the invariant lattice $S_X(\sigma_m)$ is a primitive sublattice of $S_X$
and $(X_{W,G}, S_X(\sigma_m))$ is a $S_X(\sigma_m)$--polarizable K3 surface.
Let $r$ be the rank of $S_X(\sigma_m)$. 
The BHK mirror is given by $(W^T,G^T,\sigma_m^T)$, where $\sigma_m^T$ is the non-symplectic automorphism of order $m$ on $X_{W^T,G^T}$. 
Notice that $\sigma_m$ and $\sigma_m^T$ have the same form, namely $(\tfrac{1}{m},0,0,0)$, but they act on different surfaces.

\begin{thm}\label{t:main_thm}
Suppose $m\neq 4,8,12$. If $W$ is a polynomial of the form \eqref{eq-W}, quasihomogeneous with respect to one of the 95 weight systems for K3 surfaces as in Section~\ref{K3_sec} and $G$ is a group of diagonal symmetries satisfying $J_W\subseteq G\subset\SL_W$, then $\left(X_{W^T,G^T}, S_{X^T}(\sigma_p^T\right))$ is an LPK3 mirror of $\left(X_{W,G}, S_X(\sigma_p)\right)$. 
\end{thm}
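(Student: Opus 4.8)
The plan is to reduce the theorem to a lattice comparison and then to compute both invariant lattices explicitly. By Definition \ref{mirror_defn} and the remarks following it, proving that $(X_{W^T,G^T}, S_{X^T}(\sigma_m^T))$ is an LPK3 mirror of $(X_{W,G}, S_X(\sigma_m))$ is equivalent to showing $S_{X^T}(\sigma_m^T) \cong \mirror{S_X(\sigma_m)}$. Since $S_X(\sigma_m)$ is a primitive hyperbolic sublattice of the even unimodular lattice $\klat$ of signature $(1,r-1)$, its mirror has signature $(1,19-r)$ and discriminant form $-q_{S_X(\sigma_m)}$. Invoking the uniqueness statements in Theorem \ref{t:lattice_unique} and Corollary \ref{c:latunique}, it then suffices to verify the two conditions
\[
\rk S_{X^T}(\sigma_m^T) = 20 - r, \qquad q_{S_{X^T}(\sigma_m^T)} \cong - q_{S_X(\sigma_m)},
\]
after confirming that the relevant rank-versus-length hypotheses hold so that these invariants pin down the lattices uniquely.

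The heart of the matter is computing $S_X(\sigma_m)$ itself, and since $m$ is non-prime the fixed locus no longer determines it. First I would obtain the rank $r$ as the multiplicity of the eigenvalue $1$ of $\sigma_m^*$ on $H^2(X,\CC)$: the nontrivial eigenvalues are roots of unity carried by the orthogonal complement $T_X(\sigma_m)$, and their multiplicities can be read off from the $\Gmax{W}$-graded Milnor ring of $W$ together with the contributions of the exceptional divisors introduced in resolving $Y_{W,G}$, both tracked under the $\sigma_m$-action. For the discriminant form I would use that $S_X(\sigma_m)$ is primitive in the unimodular lattice $H^2(X,\ZZ)$, so that $S_X(\sigma_m)$ and $T_X(\sigma_m)$ are orthogonal in the sense of Proposition \ref{p:orth}, giving $q_{S_X(\sigma_m)} \cong - q_{T_X(\sigma_m)}$. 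The complement $T_X(\sigma_m)$ is a $\ZZ[\zeta_m]$-module, and this structure, together with its rank and signature, should identify it with an explicit lattice (or direct sum of lattices) from Table \ref{tab-forms}, from which $q_{T_X(\sigma_m)}$ is immediate. As a cross-check I would compare with the invariant lattices of the prime-order powers of $\sigma_m$, where the results of \cite{ABS, CLPS} apply.

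The same procedure applied to $(W^T,G^T)$ yields $S_{X^T}(\sigma_m^T)$. The bridge between the two computations is the BHK duality: the transpose relation $\A{W^T} = \A{W}^T$ together with the properties of the dual group (notably $G_2/G_1 \cong G_1^T/G_2^T$ and $(J_W)^T = \SL_{W^T}$) should convert the monomial and eigenvalue data defining $T_X(\sigma_m)$ into those defining $T_{X^T}(\sigma_m^T)$, yielding the anti-isometry of discriminant forms and the complementary ranks. Concretely, I would enumerate the finitely many admissible pairs $(W,G)$ coming from the 95 families with $m \neq 4, 8, 12$, record $S_X(\sigma_m)$, its mirror, and $S_{X^T}(\sigma_m^T)$ in the tables of Section \ref{sec:tables}, and verify the two required conditions case by case.

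The main obstacle is exactly the feature absent in the prime-order setting: pinning down a non-$p$-elementary invariant lattice, and its discriminant form, without the fixed-locus dictionary. Identifying $T_X(\sigma_m)$ up to isometry requires the uniqueness hypotheses of Theorem \ref{t:lattice_unique} to hold at several primes simultaneously, and in the cases where they fail I would instead argue directly through overlattices via Proposition \ref{p:overl}. Managing the $\ZZ[\zeta_m]$-structure uniformly over all admissible $m$, and confirming that the orders $4$, $8$, $12$ are genuinely the only values for which this approach breaks down, is where the substantive difficulty lies.
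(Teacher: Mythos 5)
Your reduction of the theorem to the two lattice-theoretic conditions $\rk S_{X^T}(\sigma_m^T)=20-r$ and $q_{S_{X^T}(\sigma_m^T)}\cong -q_{S_X(\sigma_m)}$, justified by Corollary~\ref{c:U+T} and the uniqueness statements in Theorem~\ref{t:lattice_unique} and Corollary~\ref{c:latunique}, matches the paper exactly, as does the plan to verify the conditions case by case over the admissible pairs from the 95 families. The gap is in the engine you propose for actually computing the invariants. Your route to the discriminant form runs through $T_X(\sigma_m)$: you assert that its $\ZZ[\zeta_m]$-module structure together with rank and signature ``should identify it with an explicit lattice.'' For composite $m$ there is no theorem that does this, and this failure is precisely why the composite case is hard: the paper's own example of $U\oplus D_5^2$ versus $U\oplus A_3\oplus D_7$ (same rank, isomorphic discriminant groups $(\ZZ/4\ZZ)^2$, different quadratic forms) shows that coarse invariants do not pin down the genus once one leaves the $p$-elementary world, and the $\QQ(\zeta_d)$-decomposition of $T_X(\sigma_m)\otimes\QQ$ constrains only the rational, not the integral, structure. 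Your fallback---arguing ``directly through overlattices via Proposition~\ref{p:overl}''---is also not executable as stated, because an overlattice analysis needs an explicitly known finite-index sublattice to start from, and your proposal never constructs one.

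That missing construction is the paper's central device: the lattice $L_\cB$ generated by the orbit sums $b_E$ of exceptional curves and $b_C$ of smooth components of coordinate curves. Its Gram matrix is computable from the resolution, it has full rank in $S_X(\sigma_m)$ (by the paper's Lemma~\ref{l:rank}, which also gives $r=1+|\cE/\sigma_m|$---a more concrete mechanism than your eigenvalue count on the Milnor ring, though the latter is plausible for the rank alone), and Proposition~\ref{p:overl} then exhibits $S_X(\sigma_m)$ as one of finitely many explicit overlattices of $L_\cB$. The residual ambiguity is resolved by four case-dependent arguments: no nontrivial isotropic subgroups exist (Method I); a prime-order power $\sigma_m^k$ has invariant lattice of the same rank, so the $p$-elementary classification applies (Method II---this is your ``cross-check,'' but in the paper it carries proof weight, not just verification); Belcastro's generic Picard lattice coincides with $L_\cB$, forcing primitivity (Method III); or an explicit primitive-embedding obstruction via Nikulin's Proposition~\ref{p:primembedding} rules out the competing overlattice (Method IV, used for $m=16$, $r=11$ and $m=9$, $r=12$). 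Without $L_\cB$ or a substitute for it, your computation of $q_{S_X(\sigma_m)}$ does not get off the ground, and the claim that only $m=4,8,12$ resist the method cannot be assessed.
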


The theorem is proved by showing that $$S_X(\sigma_m)^\vee\cong S_{X^T}(\sigma_m^T).$$
 As we have seen in section \ref{s:lattice}, this amounts to checking that the invariants $(r,q_{S_X(\sigma_m)})$ for $X_{W,G}$ and $(r^T,q_{S_{X^T}(\sigma_m^T)})$ for $X_{W^T,G^T}$ satisfy $r=20-r^T$ and $q_{S_X(\sigma_m)}\cong -q_{S_{X^T}(\sigma_m^T)}$. Thus the heart of the proof is determining $q_{S_X(\sigma_m)}$ (or equivalently in our case $S_X(\sigma_m)$). 
In the following section, we will describe how this is done. 
It involves computing the invariant lattice and its overlattices. 
We list the results in tables in Section \ref{sec:tables}. 
Unfortunately, our method does not work for $m=4,8,12$ due to the presence of many overlattices, so that we cannot exactly pinpoint the invariant lattice.

\section{Methods}
\label{sec-method}
In the setting of Theorem \ref{t:main_thm}, one has to show that $S_X(\sigma_m)^\vee\cong S_{X^T}(\sigma_m^T)$.
Whenever $m=p$ a prime number, Theorem \ref{t:main_thm} was proved using a similar method in \cite{ABS} for $m=2$ and \cite{CLPS} for other primes. 
There is not a general method of proof in either article; instead the theorem is checked in every case.  

In \cite{ABS} and \cite{CLPS} there are several tools introduced in order to facilitate computation of the invariant lattice.  
The proof we give here follows roughly the same idea, however the methods used in the previous articles for computing $S_X(\sigma_m)$ are no longer valid, when $m$ is not prime. 
In order to illustrate the differences, we highlight briefly the method used in case of $p$ prime. 
Then we will describe the proof of the theorem, in case $m$ is not prime.

\subsection{Method for $m=p$ prime}\label{s:methodsprime}

As mentioned, the argument given in \cite{ABS} and \cite{CLPS} essentially boils down to determining the invariant lattice $S_X(\sigma_p)$ for $X_{W,G}$. 
The method for determining this lattice relies on the following powerful theorems. 

\begin{thm}[\cite{other_primes}] Given a $K3$ surface with a non-symplectic automorphism $\sigma$ of order $p$, a prime, the invariant lattice $S_X(\sigma)$ is $p$--elementary, i.e. $A_{S_X(\sigma)}\cong (\ZZ/p\ZZ)^a$. 
\end{thm}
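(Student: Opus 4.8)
The plan is to work with the induced isometry $\sigma^\ast$ on the even unimodular lattice $L:=H^2(X,\ZZ)\cong\klat$ and to extract the discriminant group of $S:=S_X(\sigma)$ from the way $S$ and its orthogonal complement $T:=T_X(\sigma)=S_X(\sigma)^\perp$ sit inside $L$. Since $S$ and $T$ are mutually orthogonal primitive sublattices of the unimodular lattice $L$ with $S^\perp_L=T$, Proposition~\ref{p:orth} gives $q_S\cong -q_T$, and the theory of discriminant forms (\cite{nikulin}) identifies both $A_S$ and $A_T$ with the glue group $H:=L/(S\oplus T)$. Thus I would reduce the statement to proving that $H$ is annihilated by $p$, equivalently that $pL\subseteq S\oplus T$.

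The main tool I would use is the norm operator $\Psi:=\sum_{i=0}^{p-1}(\sigma^\ast)^i\in\End(L)$. Over $\QQ$ there is an orthogonal decomposition $L\otimes\QQ=(S\otimes\QQ)\perp(T\otimes\QQ)$ into the $\sigma^\ast$-eigenspace for the eigenvalue $1$ and its complement. Because $\sigma$ has prime order $p$, the only irreducible factors of $x^p-1$ over $\QQ$ are $x-1$ and $\Phi_p(x)=1+x+\cdots+x^{p-1}$, so $\sigma^\ast$ acts on $T\otimes\QQ$ with minimal polynomial $\Phi_p$ and with every eigenvalue a primitive $p$-th root of unity. Since $\Psi=\Phi_p(\sigma^\ast)$, it follows that $\Psi$ acts as multiplication by $\Phi_p(1)=p$ on $S\otimes\QQ$ and annihilates $T\otimes\QQ$.

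With this splitting in hand the integrality step is immediate. Given $\ell\in L$, write $\ell=s+t$ with $s\in S\otimes\QQ$ and $t\in T\otimes\QQ$. Then $\Psi\ell=ps$ lies in $L$, and being contained in $S\otimes\QQ$ it lies in $L\cap(S\otimes\QQ)=S$ by primitivity of $S$; likewise $p\ell-\Psi\ell=pt\in L\cap(T\otimes\QQ)=T$. Hence $p\ell=\Psi\ell+(p\ell-\Psi\ell)\in S\oplus T$, giving $pL\subseteq S\oplus T$ as wanted. Therefore $H$ is a $p$-torsion group, so $A_{S_X(\sigma)}\cong H$ is an elementary abelian $p$-group $(\ZZ/p\ZZ)^a$ and $S_X(\sigma)$ is $p$-elementary.

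The argument is short, and I expect the crux to be the eigenvalue computation rather than the lattice bookkeeping: it is the primality of $p$ that restricts $x^p-1$ to the two cyclotomic factors $x-1$ and $\Phi_p$, so that $T\otimes\QQ$ is a single $\Phi_p$-isotypic block on which $\Psi$ vanishes. This is also exactly the point that fails for composite order $m$, where $x^m-1$ acquires intermediate factors $\Phi_d$ with $d\mid m$, the operator $\sigma^\ast$ no longer splits $L\otimes\QQ$ into an invariant part and a single isotypic complement, and the clean relation $pL\subseteq S\oplus T$ breaks down---precisely the obstruction that the remainder of this paper must work around.
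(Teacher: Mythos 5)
Your proof is correct, and it is essentially the argument underlying the paper: the prime-order theorem itself is quoted from \cite{other_primes} without proof, but the Lemma immediately following it in Section~\ref{sec-notprime} is proved by exactly your norm-operator computation --- Maschke decomposition of $H^2(X,\ZZ)\otimes\QQ$, vanishing of $\sum_{i=0}^{m-1}(\sigma^*)^i$ on $T_X(\sigma)$, and the glue identification $A_{S_X(\sigma)}\cong H^2(X,\ZZ)/\bigl(S_X(\sigma)\oplus T_X(\sigma)\bigr)\cong A_{T_X(\sigma)}$ --- only phrased on the discriminant group where you phrase it at the lattice level as $pL\subseteq S\oplus T$. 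One correction to your closing commentary: for composite $m$ that containment does \emph{not} break down, since the norm operator still annihilates every nontrivial isotypic block $\QQ(\xi_d)$ with $d\mid m$, $d>1$ (because $\sum_{i=0}^{m-1}\xi_d^i=0$), so $mL\subseteq S\oplus T$ and $A_{S_X(\sigma)}$ has exponent dividing $m$, which is precisely the paper's Lemma; what primality actually buys, and what is lost for composite $m$, is that exponent-$p$ torsion forces the group to be elementary abelian $(\ZZ/p\ZZ)^a$ and lets the invariants $(r,a)$ determine the lattice, rather than the containment itself.
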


\begin{thm}[\cite{RS,nikulin}] For a prime $p\neq 2$, a hyperbolic, $p$--elementary lattice $L$ with rank $r\geq 2$ is completely determined by the invariants $(r,a)$, where $a$ is the length. An indefinite 2--elementary lattice is determined by the invariants $(r,a,\delta)$, where $\delta\in{0,1}$ and $\delta=0$ if the discriminant quadratic form takes values 0 or 1 only and $\delta=1$ otherwise.
\end{thm}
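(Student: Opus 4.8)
The statement is a specialization of Nikulin's uniqueness criterion (Theorem~\ref{t:lattice_unique}), so the plan is to reduce it to a classification of the discriminant quadratic form $q_L$ on $A_L\cong(\ZZ/p\ZZ)^a$ and then identify exactly which numerical data determine that form. A hyperbolic lattice has signature $(1,r-1)$, so $t_+=1$, $t_-=r-1\geq 1$ and $t_++t_-=r$; hypothesis (1) of Theorem~\ref{t:lattice_unique} holds once $r\geq 3$, and the finitely many rank-$2$ cases (where $a\leq 2$) I would treat separately, by hand or via Corollary~\ref{c:latunique}. Because $A_L$ is $p$-torsion, hypotheses (2)--(3) hold whenever $\rk L\geq 2+a$, and otherwise hold because, for $a\geq 2$, $q_L$ visibly contains a two-dimensional summand of one of the permitted types $w_{p,1}^\epsilon\oplus w_{p,1}^{\epsilon'}$, $u_1$, or $v_1$. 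Thus the entire content is to show that $q_L$ is pinned down by $(r,a)$ for odd $p$ and by $(r,a,\delta)$ for $p=2$.

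For $p\neq 2$ I would use Theorem~\ref{t:relations}: every finite quadratic form on $(\ZZ/p\ZZ)^a$ is a sum $q_L\cong\bigoplus_{i=1}^a w_{p,1}^{\epsilon_i}$ with $\epsilon_i\in\{\pm 1\}$. Regarded as a nondegenerate quadratic form in $a$ variables over the field with $p$ elements, its isometry class is determined by $a$ together with its discriminant $\prod_i\epsilon_i\in\{\pm 1\}$. The discriminant is not a free parameter: the congruence $t_+-t_-\equiv\sign q_L\imod 8$ used in Corollary~\ref{c:latunique}, combined with the fixed contribution of each generator $w_{p,1}^{\epsilon}$ to $\sign q_L$, forces the value of $\prod_i\epsilon_i$ once $t_+-t_-=2-r$ is fixed, since the two possibilities are distinguished modulo $8$. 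Hence $(r,a)$ determines $q_L$, and Theorem~\ref{t:lattice_unique} upgrades this to uniqueness of $L$.

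The case $p=2$ is the crux and the expected main obstacle, because here the signature congruence alone no longer separates isometry classes and one needs the extra invariant $\delta$. Now $q_L$ is built from the generators $w_{2,1}^{\pm 1}$, $u_1$, and $v_1$; the key observation is that $w_{2,1}^\epsilon$ takes the value $\epsilon/2\notin\{0,1\}$ in $\QQ/2\ZZ$, whereas $u_1$ and $v_1$ take only the values $0$ and $1$, so $\delta=1$ precisely when some $w_{2,1}^\epsilon$ summand occurs and $\delta=0$ precisely when $q_L$ is a sum of $u_1$'s and $v_1$'s. The work is to run the Nikulin relations (\cite[Thm.~1.8.2]{nikulin}) within each value of $\delta$: when $\delta=0$ the relation $u_1\oplus u_1\cong v_1\oplus v_1$ reduces $q_L$ to $u_1^{\oplus s}\oplus v_1^{\oplus\nu}$ with $\nu\in\{0,1\}$, and $\nu$ is then fixed by $\sign q_L$; when $\delta=1$ the relations of the form $u_1\oplus w_{2,1}^\epsilon\cong v_1\oplus w_{2,1}^{\epsilon'}$ let one trade all $u_1,v_1$ summands for $w_{2,1}^{\pm 1}$'s, after which length and signature determine the form. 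In both cases $(r,a,\delta)$ fixes $q_L$, and Theorem~\ref{t:lattice_unique} again gives uniqueness. The delicate point throughout is the $2$-elementary bookkeeping---verifying that for $k=1$ these relations hold in the stated form and that no invariant beyond $\delta$ survives---which is exactly why the $2$-elementary case must carry an extra parameter.
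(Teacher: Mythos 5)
The paper offers no proof of this statement at all---it is imported wholesale from Rudakov--Shafarevich and Nikulin---so your proposal must stand on its own merits. Its core strategy (pin down $q_L$ from the numerical invariants, then invoke Theorem~\ref{t:lattice_unique}) is the standard route and is sound for $r\geq 3$: in particular your odd-$p$ argument is correct, since the two candidate forms of length $a$ over $\mathbb{F}_p$ have signatures differing by $4 \bmod 8$, so the signature congruence selects exactly one. The genuine gap is the rank-$2$ case, which you defer to ``by hand or via Corollary~\ref{c:latunique}.'' Corollary~\ref{c:latunique} is inapplicable there whenever $a\geq 1$ (its hypothesis $t_++t_-\geq 2+l(A_q)$ fails), condition (1) of Theorem~\ref{t:lattice_unique} fails as well, and the rank-$2$ cases are not finitely many: there is one family for every odd prime $p$. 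Worse, no argument can close this gap as stated, because at rank $2$ the claim is false for large $p$: for $p=229$ the class number of binary quadratic forms of discriminant $229$ is $3$, all three classes lie in a single genus, and doubling the principal form and a non-principal form produces two non-isometric even hyperbolic $229$-elementary lattices with the same invariants $(r,a)=(2,1)$ and even the same discriminant quadratic form (one represents $2$, the other does not). The statement survives in this paper's applications only because the primes arising for K3 automorphisms satisfy $p\leq 19$, where the relevant class numbers are $1$; a correct proof must either impose $r\geq 3$, restrict $p$, or use the separate rank-$2$ analysis of the cited sources.

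There is a second, more local error in your $p=2$ argument: the relation you propose to use, $u_1\oplus w_{2,1}^{\epsilon}\cong v_1\oplus w_{2,1}^{\epsilon'}$, holds for no choice of $\epsilon,\epsilon'$. Indeed $\sign(u_1)=0$, $\sign(v_1)=4$, $\sign(w_{2,1}^{\pm1})=\pm1 \bmod 8$, so the left side has signature in $\{1,7\}$ while the right side has signature in $\{3,5\}$. The relations that actually do the job (consequences of Nikulin's Prop.~1.8.2, and exactly the ones your reduction needs) are $u_1\oplus w_{2,1}^{\epsilon}\cong w_{2,1}^{\epsilon}\oplus w_{2,1}^{1}\oplus w_{2,1}^{-1}$ and $v_1\oplus w_{2,1}^{\epsilon}\cong w_{2,1}^{-\epsilon}\oplus w_{2,1}^{-\epsilon}\oplus w_{2,1}^{-\epsilon}$: these let you trade every $u_1$ and $v_1$ summand for $w_{2,1}^{\pm1}$'s when $\delta=1$, after which length and signature determine the form; your $\delta=0$ analysis via $u_1\oplus u_1\cong v_1\oplus v_1$ is fine. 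So the $p=2$ half of your argument is repairable by substituting the correct relations, whereas the rank-$2$ portion of the odd-$p$ half cannot be repaired without additional hypotheses.
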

By Proposition \ref{p:orth}, the orthogonal complement in $L_{K3}$ of a $p$--elementary lattice with invariants $(r,a)$ is a $p$--elementary lattice with invariants $(22-r,a)$. For both a given 2--elementary lattice and its orthogonal complement, the third invariant $\delta$ agrees. 
In the setting of Theorem \ref{t:main_thm}, Corollary \ref{c:U+T} shows that we also have $S_X(\sigma_p)^\perp\cong U\oplus M$ where $M$ is a hyperbolic $p$--elementary with invariants $(20-r,a)$. Thus for $p\neq 2$ it is enough to verify that $(r,a)$ for $X_{W,G}$ and $(r^T,a_T)$ for $X_{W^T,G^T}$ satisfy $r=20-r^T$ and $a=a_T$. For $p=2$ the third invariant $\delta$ must also be compared, which the authors checked in \cite{ABS}. 

In order to compare $(r,a)$, we first look at the topology of the fixed point locus. 

\begin{thm}[cf. \cite{nikulin2, other_primes}]\label{summary_invariants}
Let $X$ be a K3 surface with a non-symplectic automorphism $\sigma$ of prime order $p\neq 2$. Then the fixed locus $X^\sigma$ is nonempty, and consists of either isolated points or a disjoint union of smooth curves and isolated points of the following form:
\begin{equation}
X^\sigma=C\cup R_1\cup\ldots\cup R_k\cup \{p_1,\ldots,p_n\}\label{e:fixedlocus}.
\end{equation}
Here $C$ is a curve of genus $g\geq 0$, $R_i$ are rational curves and $p_i$ are isolated points. 

If $p=2$, then the fixed locus is either empty, the disjoint union of two elliptic curves, or is of the form \eqref{e:fixedlocus} with $n=0$. 
\end{thm}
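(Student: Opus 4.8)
The plan is to reduce the statement to a local analysis at the fixed points, constrained globally by the holomorphic Lefschetz formula and by the signature of the invariant lattice $S_X(\sigma)$. \emph{Local models.} Since $\sigma$ has finite order $p$ and $X$ is a smooth surface, Cartan's linearization lemma provides holomorphic coordinates near each fixed point in which $\sigma$ acts linearly, with $d\sigma$ diagonal and eigenvalues equal to $p$-th roots of unity. Hence $X^\sigma$ is smooth and each component is either an isolated point or a smooth curve, according to whether $d\sigma$ has no trivial eigenvalue or exactly one. The non-symplectic hypothesis pins down the admissible models: for $p$ prime one has $\sigma^*\omega_X=\xi_p\,\omega_X$ with $\xi_p$ a primitive $p$-th root of unity, and because $\sigma^*$ acts on $\wedge^2 T_x^*X$ by the product $\xi_p^{a}\xi_p^{b}$ of the tangent eigenvalues, the constraint $a+b\equiv 1\pmod p$ must hold. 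An isolated point then forces $a,b\not\equiv 0$, while along a fixed curve exactly one eigenvalue is trivial and the normal bundle is acted on by $\xi_p$. When $p=2$ the only residue pair satisfying $a+b\equiv 1$ is $\{0,1\}$ (eigenvalues $1,-1$), so every fixed point lies on a curve and no isolated points occur, which is the assertion $n=0$.

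\emph{Non-emptiness.} I would invoke the holomorphic Lefschetz fixed-point formula. As $\sigma^*$ acts trivially on $H^0(X,\mathcal O_X)\cong\CC$, while $H^1(X,\mathcal O_X)=0$ and $\sigma^*$ acts on $H^2(X,\mathcal O_X)$ by $\overline{\xi_p}$, the holomorphic Lefschetz number is $1+\overline{\xi_p}$. This is nonzero for odd $p$, forcing $X^\sigma\neq\emptyset$; it vanishes precisely when $\xi_p=-1$, that is $p=2$, in agreement with the fact that a non-symplectic involution may act freely.

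\emph{Genus bound.} The fundamental classes of the fixed curves are $\sigma^*$-invariant and hence lie in $S_X(\sigma)$, which is hyperbolic of signature $(1,t)$. By adjunction on a K3 surface $C^2=2g(C)-2$, so a rational component has $C^2=-2$ and a component of genus $\geq 1$ has $C^2\geq 0$; distinct components are disjoint and therefore orthogonal in $S_X(\sigma)$. Suppose two components had genus $\geq 1$. They cannot both have positive self-intersection, as that would give a positive-definite rank-two sublattice in signature $(1,t)$; nor can exactly one be isotropic, since an isotropic class orthogonal to a class of positive square would sit in the negative-definite orthogonal complement of the latter. Hence both are isotropic of genus $1$, and two orthogonal isotropic classes in a Lorentzian lattice are proportional, so the curves are cohomologous fibers of an elliptic fibration preserved by $\sigma$. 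For odd $p$ this is impossible: the normal action $\xi_p$ would force the induced automorphism of the base $\PP^1$ to have derivative $\xi_p$ at both of its fixed points, whereas an order-$p$ automorphism of $\PP^1$ has reciprocal derivatives $\xi_p,\xi_p^{-1}$, and equality requires $p=2$. Therefore at most one fixed component is non-rational, which is the stated form; the configuration of two disjoint elliptic curves occurs exactly in the remaining case $p=2$.

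\emph{Main obstacle.} The delicate point is the eigenvalue bookkeeping that links the non-symplectic weight $\xi_p$ to the action on normal bundles, which is what excludes the forbidden local configurations and cleanly isolates the exceptional behavior at $p=2$. The signature argument for the genus bound is conceptually straightforward but genuinely needs the light-cone geometry of hyperbolic lattices to treat the degenerate isotropic (genus-$1$) case, together with the elliptic-fibration analysis to finish it.
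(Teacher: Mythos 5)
The paper does not actually prove Theorem~\ref{summary_invariants}: it is quoted as known background, with the proof deferred to \cite{nikulin2} and \cite{other_primes}, so there is no internal argument to compare against. Your proposal is, in essence, a correct self-contained reconstruction of the standard proof from those references, and all three pillars hold up under scrutiny: linearization at fixed points together with the eigenvalue constraint forced by $\sigma^*\omega_X=\xi_p\,\omega_X$ gives smoothness of $X^\sigma$ and rules out isolated points exactly when $p=2$; the holomorphic Lefschetz number $1+\overline{\xi}_p$ is nonzero for odd $p$, giving non-emptiness, and its vanishing at $p=2$ is precisely why involutions may act freely; and hyperbolicity of $S_X(\sigma)$, combined with adjunction and the orthogonality of disjoint fixed curves, shows at most one component can have genus $\geq 1$, with the degenerate case of two orthogonal isotropic classes correctly handled by your elliptic-fibration and reciprocal-derivative argument, which is exactly what isolates $p=2$. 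Two small completeness remarks. First, your congruence $a+b\equiv 1\pmod p$ versus $a+b\equiv -1\pmod p$ depends on whether one records eigenvalues of $d\sigma$ on the tangent space or of $\sigma^*$ on the cotangent space; this convention ambiguity affects nothing downstream. Second, for $p=2$ the statement is a trichotomy, so you should also verify that when two elliptic curves are fixed, nothing else is fixed: this follows from your own fibration argument, since any additional fixed component meets the fiber class in $0$, hence is vertical, hence maps to a third fixed point of the induced automorphism of $\PP^1$, forcing that automorphism to be the identity and contradicting the normal-bundle action by $-1$; the same observation excludes three or more fixed elliptic curves.
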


In \cite{ABS} and \cite{CLPS}, $\sigma_p$ always fixes a curve. Furthermore, the case of two elliptic curves does not appear in this setting described there. Therefore, the fixed locus is determined by the invariants $(g,k,n)$.  
In \cite{other_primes}, the authors give formulas to calculate $(r,a)$ given $(g,k,n)$ for each prime $p$ (see \cite[Theorem 0.1]{other_primes}).
%
%
%
%
Thus, in order to prove Theorem \ref{t:main_thm} for $p$ prime, one first computes the invariants $(g,k,n)$,
and from them computes the invariants $(r,a)$ (if $p=2$, additional computation are required to obtain $\delta$). 
Then one compares the invariants for BHK mirrors as described above.

\subsection{Method for $m$ not prime}\label{sec-notprime}

If $m$ is not prime, $S_X(\sigma_m)$ is no longer $p$--elementary for any prime $p$. 
The best we can say about $A_{S_X(\sigma)}$ is that it is a subgroup of $(\ZZ_m)^a$, as in the following lemma.  

\begin{lem}
Let $\sigma$ be a purely non-symplectic automorphism of order $m$. The discriminant group $A_{S_X(\sigma)}$ is a subgroup of $(\ZZ_m)^a$ for some $a\geq 0$. 
\end{lem}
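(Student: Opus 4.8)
The plan is to transfer the problem from the invariant lattice $S:=S_X(\sigma)$ to its orthogonal complement $T:=T_X(\sigma)$, on which $\sigma^*$ acts \emph{without} nonzero fixed vectors, and there exploit the ``norm'' relation for the cyclic action. Since $H^2(X,\ZZ)$ is even unimodular and $S$ is primitive, Proposition \ref{p:orth} yields an isomorphism of discriminant groups $A_S\cong A_T$ (indeed $q_S\cong -q_T$). Hence it suffices to prove that $A_T=T^*/T$ is annihilated by $m$: a finite abelian group killed by $m$ is a direct sum of cyclic groups $\ZZ/d_j\ZZ$ with $d_j\mid m$, and therefore embeds into $(\ZZ/m\ZZ)^a$ with $a$ its length.

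Writing $g=\sigma^*$, the first step is the elementary observation that $(g-1)H^2(X,\ZZ)\subseteq T$: for any fixed class $s\in S$ and any $w$, one has $\langle (g-1)w,s\rangle=\langle w,g^{-1}s\rangle-\langle w,s\rangle=0$ because $g$ is an isometry and $s$ is fixed, so $(g-1)w$ is orthogonal to $S$ and lies in $T$. The second step records that $g$ has no nonzero fixed vector on $T\otimes\QQ$ (by definition $S$ is the full invariant lattice, so $(H^2(X,\ZZ)\otimes\QQ)^g=S\otimes\QQ$), hence $g-1$ is invertible there; combined with $N(g-1)=g^m-1=0$ for the norm operator $N=1+g+\cdots+g^{m-1}$, this forces $N=0$ on $T\otimes\QQ$, and in particular on $T^*$.

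The third step upgrades the first from $H^2(X,\ZZ)$ to $T^*$. Using that $T^*$ is the orthogonal projection $p_T\bigl(H^2(X,\ZZ)\bigr)$ (a consequence of unimodularity) and that $g$ commutes with this projection, any $x=p_T(z)\in T^*$ satisfies $(g-1)x=p_T\bigl((g-1)z\bigr)=(g-1)z\in T$ by the first step; thus $(g-1)T^*\subseteq T$, equivalently $g$ acts trivially on $A_T$. Finally, for $x\in T^*$ I would use $Nx=0$ to write
\[
mx=\sum_{i=0}^{m-1}(1-g^i)x=-\sum_{i=0}^{m-1}(g^i-1)x,
\]
and since $g^i-1=(g-1)(1+g+\cdots+g^{i-1})$ with $(1+\cdots+g^{i-1})x\in T^*$, each summand lies in $(g-1)T^*\subseteq T$. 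Hence $mT^*\subseteq T$, so $A_T$ is $m$-torsion, completing the argument.

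I expect the only delicate point to be the third step---deducing $(g-1)T^*\subseteq T$ rather than merely $(g-1)H^2(X,\ZZ)\subseteq T$---since $T^*$ is \emph{not} contained in $H^2(X,\ZZ)$. This is precisely where unimodularity (through the identity $T^*=p_T(H^2(X,\ZZ))$) and the commutation of $g$ with orthogonal projection carry the weight; everything else is formal manipulation with the cyclic action and the passage $A_S\cong A_T$.
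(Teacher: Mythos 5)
Your proof is correct, and it shares the paper's overall skeleton: both arguments reduce the claim to showing that $A_{T_X(\sigma)}$ is annihilated by $m$, and both do so by combining the two facts that the norm $N=1+g+\cdots+g^{m-1}$ (with $g=\sigma^*$) vanishes on $T_X(\sigma)$ and that $g$ acts trivially on $A_{T_X(\sigma)}$, whence $mx\equiv Nx\equiv 0 \pmod{T_X(\sigma)}$ for $x\in T_X(\sigma)^*$. The difference lies in how you justify these two facts. The paper gets $N=0$ on $T_X(\sigma)$ from Maschke's theorem: $H^2(X,\ZZ)\otimes\QQ$ decomposes into irreducible $\QQ[\ZZ/m\ZZ]$-modules of the form $\QQ(\xi_d)$ with $d\mid m$, and the norm kills each summand with $d>1$; your route---$g-1$ is invertible on the fixed-point-free space $T_X(\sigma)\otimes\QQ$ while $N(g-1)=g^m-1=0$---is more elementary and avoids representation theory altogether. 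For the trivial action on the discriminant, the paper transports the (obviously trivial) $\sigma^*$-action on $A_{S_X(\sigma)}$ through the $\sigma^*$-equivariant isomorphisms $A_{S_X(\sigma)}\cong H^2(X,\ZZ)/(S_X(\sigma)\oplus T_X(\sigma))\cong A_{T_X(\sigma)}$, whereas you prove $(g-1)T^*\subseteq T$ directly from $(g-1)H^2(X,\ZZ)\subseteq T$ together with the unimodularity identity $T^*=p_T\bigl(H^2(X,\ZZ)\bigr)$ and the fact that $g$ commutes with the orthogonal projection $p_T$; you correctly identify this upgrade from $H^2(X,\ZZ)$ to $T^*$ as the delicate step, and your use of primitivity of $T_X(\sigma)$ (implicit in $T^*=p_T(H^2)$) is legitimate since orthogonal complements in $H^2(X,\ZZ)$ are automatically primitive. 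Your passage back from $A_T$ to $A_S$ via Proposition~\ref{p:orth} ($q_S\cong-q_T$, so $A_S\cong A_T$) is also a mild variant of the paper's quotient-chain identification. In short: same strategy, with self-contained lattice-theoretic substitutes for the paper's two key sub-steps; the paper's version is shorter given the cited machinery, yours makes every ingredient explicit.
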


\begin{proof}
We know $A_{S_X(\sigma)}$ is a finite abelian group. Thus it suffices to show that for each $x\in A_{S_X(\sigma)}$, we have $mx=0$. By Maschke's Theorem, the vector space $H^2(X,\ZZ)\otimes \QQ$ is completely reducible. Therefore it decomposes as a sum of irreducible $\QQ[\ZZ/m\ZZ]$-modules. Each direct summand is of the form $Q(\xi_d)$ where $d|m$. So we have $\sum_{i=0}^{m-1} (\sigma^*)^i=0$ on $T_X(\sigma)$. Now we have $A_{S_X(\sigma)}\cong H^2(X,\ZZ)/(S_X(\sigma)\oplus T_X(\sigma))\cong A_{T_X(\sigma)}$, and these automorphisms commute with the action of $\sigma^*$. Thus $\sigma^*$ acts by identity on $A_{T_X(\sigma)}$, but since $\sum_{i=0}^{m-1} (\sigma^*)^i=0$ on $T_X(\sigma)$, $mx=0 \imod {T_X(\sigma)}$ for $x\in T_X(\sigma)^*$. 
\end{proof}

We see that $A_{S_X(\sigma_m)}$ can have any possible order dividing a power of $m$. Even in the best of situations (e.g. $A_{S_X(\sigma_m)}\cong \ZZ_m^a$) the lattice is no longer determined by $(r,a)$. 
Consider for example the two lattices $L_1=U\oplus D_5^2$ and $L_2=U\oplus A_3\oplus D_7$: they both have discriminant group $\ZZ_4^2$ and $(r,a)=(12,2)$, but $q_{L_1}=(w_{2,2}^3)^2$ whereas $q_{L_2}=w_{2,2}^1\oplus w_{2,2}^5$. In other words, we have two different lattices with the same rank, and isomorphic discriminant groups. 

It is true, however, that the lattice is determined in all cases considered here by 
the rank and the discriminant quadratic form 
(cf. Corollary \ref{c:latunique}). 
Thus, rather than computing $(r,a)$ it suffices to compute $(r,q_{S_X(\sigma_m)})$. 
We will be able to compute these invariants for all K3 surfaces $X_{W,G}$ with $W$ as in \eqref{eq-W} and $G\subset \SL_W$ except in case $m=4,8,12$. 

The first step to compute  $S_X(\sigma_m)$ is to compute its rank $r$.
In order to do so, we describe an important set of curves on $X_{W,G}$, the orbits of the exceptional curves.   
Consider the hypersurface $Y_{W,G}$ as in Section \ref{BHK_sec}, and the resolution of singularities $X_{W,G}\to Y_{W,G}$. Because $W$ is nondegenerate, all of the singularities of $Y_{W,G}$ lie on the coordinate curves. 

Let $\cE$ denote the set of the exceptional curves. Because $\sigma_m$ leaves the coordinate curves invariant, the action of $\sigma_m$ on $X_{W,G}$ induces an action of $\sigma_m$ on $\cE$. Given an exceptional curve $E$, let $G_E$ denote the isotropy group for $E$ and let $\cE/\sigma_m$ denote the set of orbits of this action. Now we set 
\[
b_E=\tfrac 1{|G_E|}\sum_{i=0}^{m-1}\sigma_m^iE.
\]
This is simply the sum of all curves in the orbit of $E$. In particular, if $E$ is fixed by $\sigma_m$, then $b_E=E$. We can use this topology to determine $r$: 

\begin{lem}\label{l:rank}
The rank $r$ of $S_X(\sigma)$ is equal to 1 plus the number of orbits of exceptional curves in the blow-up $X_{W,G}\to Y_{W,G}$, i.e. 
\[
\rk S_X(\sigma_m)=1+|\cE/\sigma_m|
\]
\end{lem}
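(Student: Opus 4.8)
The plan is to compute the rank of $S_X(\sigma_m)$ as the dimension of the eigenvalue-$1$ subspace of $\sigma_m^*$ acting on $H^2(X,\QQ)$. Since $\sigma_m$ has finite order, taking invariants commutes with $\otimes\,\QQ$, so that $\rk S_X(\sigma_m)=\dim_\QQ H^2(X,\QQ)^{\sigma_m^*}$; recall also that, $\sigma_m$ being non-symplectic, this invariant space already lies inside $S_X\otimes\QQ$. The morphism $\pi\colon X_{W,G}\to Y_{W,G}$ resolves only rational double points, so one has the standard decomposition $H^2(X,\QQ)=\pi^*H^2(Y,\QQ)\oplus\langle\cE\rangle_\QQ$, where $\langle\cE\rangle_\QQ$ is the span of the exceptional curves. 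Because $\sigma_m$ commutes with $\pi$ and preserves the exceptional locus, this decomposition is $\sigma_m^*$-equivariant, and the rank we want is the sum of the dimensions of the invariants of the two summands.

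First I would treat the exceptional summand. The automorphism $\sigma_m$ carries each irreducible exceptional curve to another irreducible exceptional curve, so $\sigma_m^*$ acts on the basis $\cE$ of $\langle\cE\rangle_\QQ$ by a genuine permutation. Hence $(\langle\cE\rangle_\QQ)^{\sigma_m^*}$ is the invariant subspace of a permutation representation, whose dimension equals the number of orbits $|\cE/\sigma_m|$ and a basis of which is given precisely by the orbit sums $b_E=\tfrac1{|G_E|}\sum_{i=0}^{m-1}\sigma_m^iE$. This accounts for the term $|\cE/\sigma_m|$.

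The heart of the argument, and the step I expect to be the main obstacle, is to show that $(\pi^*H^2(Y,\QQ))^{\sigma_m^*}$ is exactly one-dimensional, spanned by the polarization $h$. Here I would exploit the special shape $W=x_1^m+f(x_2,x_3,x_4)$: dropping the variable $x_1$ exhibits the hypersurface $\{W=0\}\subset\PP(w_1,\dots,w_4)$ as an $m$-fold cyclic cover of $\PP(w_2,w_3,w_4)$ branched along $\{f=0\}$, with $\sigma_m$ as its deck transformation. For a cyclic cover the invariant cohomology is the pullback of the cohomology of the base, and $H^2(\PP(w_2,w_3,w_4),\QQ)\cong\QQ$ is generated by the hyperplane class; passing to the $\widetilde G$-quotient $Y$ and taking further invariants only restricts to the $\widetilde G$-invariant part of this line, which is still $\QQ h$ since $h$ is the ($\widetilde G$-invariant) polarization. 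Equivalently, one may note that the transcendental part of $H^2(Y,\QQ)$ carries no invariants because $\sigma_m^*\omega_X=\zeta_m\omega_X$ with $\zeta_m\neq1$, so every invariant class is algebraic, and the cyclic-cover computation pins the invariant algebraic classes down to multiples of $h$. Making the orbifold, weighted, and singular aspects of this cyclic-cover statement precise, so that the clean smooth-cover conclusion survives, is the delicate point.

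Assembling the two computations gives $\dim_\QQ H^2(X,\QQ)^{\sigma_m^*}=1+|\cE/\sigma_m|$, and since $h$ is not contained in $\langle\cE\rangle_\QQ$ (it is not exceptional) the classes $h$ and $\{b_E\}$ are linearly independent, so this list is genuinely a basis of $S_X(\sigma_m)\otimes\QQ$. This yields $\rk S_X(\sigma_m)=1+|\cE/\sigma_m|$, as claimed.
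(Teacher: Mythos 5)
Your proof is correct, and it reaches the same two pillars as the paper's argument --- the fact that $h^2(\PP(w_2,w_3,w_4),\QQ)=1$ and the identification of invariant cohomology with the cohomology of a finite quotient --- but it organizes them differently. The paper never splits $H^2(X,\QQ)$ upstairs: it forms the quotient $X_{W,G}/\sigma_m$, fits it into a square over $Y_{W,G}\to \PP(w_2,w_3,w_4)\cong Y_{W,G}/\sigma_m$, observes that $X_{W,G}/\sigma_m\to\PP(w_2,w_3,w_4)$ is a blow-up with exactly one exceptional curve per orbit in $\cE/\sigma_m$, so that $\dim H^2(X_{W,G}/\sigma_m,\QQ)=1+|\cE/\sigma_m|$, and then applies $H^2(X/\sigma_m,\QQ)=H^2(X,\QQ)^{\sigma_m}$ (citing Macdonald) once, at the level of $X$. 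You instead avoid forming $X/\sigma_m$ altogether, using the $\sigma_m$-equivariant splitting $H^2(X,\QQ)=\pi^*H^2(Y,\QQ)\oplus\langle\cE\rangle_\QQ$ (valid since the singularities of $Y_{W,G}$ are ADE, the resolution being crepant), handling $\langle\cE\rangle_\QQ$ as a permutation representation and the pullback summand via the cyclic cover $\{W=0\}\to\PP(w_2,w_3,w_4)$. Each version buys something: yours makes the orbit count a transparent character computation and sidesteps the need to describe $X/\sigma_m$ as a blow-up (the paper's claim that $H^2(X/\sigma_m,\QQ)$ is generated by exceptional curves plus the pullback from $\PP(w_2,w_3,w_4)$ secretly requires a splitting much like yours), while the paper's version needs the quotient-cohomology fact only once. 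Finally, the ``delicate point'' you flag at the end is not actually an obstacle: the statement you need --- $H^*(M/H,\QQ)\cong H^*(M,\QQ)^H$ for a finite group $H$ acting on a compact complex variety --- holds with no smoothness hypotheses and is exactly the result of \cite{macdonald} that the paper invokes; applied to $\{W=0\}\to\PP(w_2,w_3,w_4)$ and to the $\widetilde G$-quotient (whose action commutes with $\sigma_m$, all symmetries being diagonal), it pins $H^2(Y,\QQ)^{\sigma_m}$ down to the line spanned by the polarization, so your argument is complete as written.
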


\begin{proof}

Consider the quotient $X_{W,G}/\sigma_m$. 
We have the following diagram


\[  
\begin{tikzcd} 
X_{W,G}   \arrow{r}{} \arrow{d}{\pi}  &  Y_{W,G} \arrow{d}{\pi}\\
X_{W,G}/\sigma_m\arrow{r}{}  &  \PP(w_2,w_3,w_4) \\
\end{tikzcd}
\]
where the bottom horizontal arrow is obtained by blowing up isolated singular points of $\PP(w_2,w_3,w_4)$ and singular points on the branch locus $\set{f(x_2,x_3,x_4)=0}\subset \PP(w_2,w_3,w_4)$. Notice in the quotient $\PP(w_2,w_3,w_4)\cong Y_{W,G}/\sigma_m$, the branch locus corresponds to the curve $y=0$ in $Y_{W,G}$. Furthermore, each orbit of exceptional curves will correspond to exactly one exceptional curve of the bottom horizontal map. 

Since $h^2(\PP(w_2,w_3,w_4))=1$ (see e.g. \cite{twistedproj} or \cite{dolgachev2}), and $H^2(X_{W,G}/\sigma_m,\QQ)$ is generated by the exceptional curves and the pullback from $\PP(w_2,w_3,w_4)$, we have 
\[
\dim H^2(X_{W,G}/\sigma_m,\QQ)=1+|\cE/\sigma_m|.
\]

Finally, we know by \cite{macdonald} that $H^2(X_{W,G}/\sigma_m,\QQ)=H^2(X_{W,G}, \QQ)^{\sigma_m}$. This implies the result.  
\end{proof}

\begin{rem}
Although $H^2(X_{W,G}, \QQ)^{\sigma_m}$ is generated by the orbits of the exceptional curves and one coordinate curve, this is not always the case for the lattice $S_X(\sigma_m)$.
\end{rem}

The second step in determining $(r,q)$ requires another set of curves. 
Let $\cK$ be the set of smooth irreducible components of the (strict transforms of the) coordinate curves ${x_i=0}$
(we omit curves that have worse singularities than a separating nodes between two or more smooth components).
As before $\sigma_m$ acts on this set. 
For $C\in \cK$, we define $G_C$ as the isotropy group, $\cK/\sigma_m$ as the set of orbits, and 
\[
b_C=\tfrac 1{|G_C|}\sum_{i=0}^{m-1}\sigma_m^iC.
\]
Notice that $\sigma_m$ leaves the coordinate curves invariant, so $b_C$ is the sum of some of the smooth irreducible components of one of the coordinate curves. In cases where the coordinate curve is smooth, $b_C$ is the coordinate curve. There are cases, however, where the coordinate curve is not irreducible.

Finally we define the set of orbit curves $\cB=\set{b_E\mid E\in \cE/\sigma_m}\cup \set{b_C\mid C\in \cK/\sigma_m}$. Let $L_{\cB}$ denote the sublattice of $S_X$ generated by $\cB$. This lattice is clearly a sublattice of $S(\sigma_m)$ since $\sigma_m$ fixes each of the generators. Therefore, the rank is at most $r$. 
By Lemma~\ref{l:rank}, we see that $\rk L_\cB=r$. Hence $S(\sigma_m)$ is an overlattice of $L_\cB$. Because of this explicit description, we can actually determine the lattice $L_{\cB}$ in each case. (See the appendix for Magma code, which allows us to compute $L_\cB$.)  

Determining $(r,q)$ now breaks into four different cases, each of which requires a different method. 
We now describe those four methods. In most cases, we show $L_\cB=S_X(\sigma_m)$.

\subsubsection{Method I}
The first method is the most common. 
Using Proposition~\ref{p:overl}, we can determine all of the overlattices of $L_\cB$. It often happens that the discriminant group has no isotropic subgroups, and so there are no nontrivial overlattices. Since $S_X(\sigma_m)$ is an overlattice of $L_\cB$, we can conclude $S_X(\sigma_m)=L_\cB$. An example of this situation is shown in Example \ref{ex-method1}. 

This covers all possibilities when $m=18,20,24,30,42$, and parts of the other orders, namely whenever $m$ and $r$ are one of the following possibilities: $m=10$,  $r=3,8,12,17$; $m=9$, $r=4,16$; and $m=6$, $r=1,4,6,9,11,14,16,19$. One may notice in Table~\ref{tab-6} that there are two possibilities for  $m=6, r=10$. One of them has quadratic form $4\omega_{3,1}^1$ and therefore has no overlattices, so this method works. But the other $v+4\omega_{2,1}^{-1}$ does have overlattices, so we use another method.

\subsubsection{Method II}
The second method does not use $L_\cB$. Instead, we notice that some power $\sigma_m^k$ has order $p$ for some prime $p$ and $S_X(\sigma_m)\subset S_X(\sigma_m^k)$ is a primitive embedding of lattices. If it happens that $\rk S_X(\sigma_m)= \rk S_X(\sigma_m^k)$, then indeed $S_X(\sigma_m)= S_X(\sigma_m^k)$. In this case $S_X(\sigma_m)$ is $p$--elementary and we can use the methods in Section~\ref{s:methodsprime} to compute it.  
See Example~\ref{ex-method2} for an example of this situation.

This covers the remaining cases with $m=6,10, 15, 22$, the cases with $m=9$, $r=16$, as well as the two cases with $m=14$, $r=13$.  

\subsubsection{Method III} 
For the third method, we use the results of Belcastro in \cite{belcastro}. Belcastro has computed the Picard lattice $S_{X_t}$ of the general member of a family of K3 surfaces $X_t$ for each of the 95 weight systems. Each invertible polynomial $W$ yields a special member of this family, but often with bigger Picard lattice. It often happens that $S_{X_t}\cong L_\cB$. When this happens, because $S_{X_t}\subset H^2(X_t,\ZZ)$ is a primitive embedding of lattices, the composition of embeddings 
\[
L_\cB\subset S_X\subset H^2(X_W,\ZZ)
\]
remains a primitive embedding for the special member, i.e. $L_\cB$ is primitively embedded in $H^2(X_W,\ZZ)$. Here we require the group $G=J_W$. 

By the isomorphism theorems for groups, $S_{X_W}/L_\cB\subset H^2(X_W.\ZZ)/L_\cB$, and the latter is a free abelian group. Therefore $L_\cB$ is a primitive sublattice of $S_X$. By similar argument, $L_\cB$ is a primitive sublattice of $S_X(\sigma_m)$. Since they have the same rank, we see $L_\cB=S_X(\sigma_m)$. We illustrate this method in Example~\ref{ex-meth3}.

This method is used whenever $m$ and $r$ are one of the following possibilities: $m=9$, $r=8$; $m=14$, $r=7$; and $m=16$, $r=9$. 

\subsubsection{Method IV} This leaves two cases:  $m=16$, $r=11$ and $m=9$, $r=12$. As each of these cases is different we will work out each case in Examples ~\ref{ex-meth41} and \ref{ex-meth42}. This method involves finding $S_{X_{W,G}}$ and showing explicitly that $L_\cB$ embeds primitively into $S_{X_{W,G}}$. Then, as in Method II, we can conclude that $L_\cB=S_X(\sigma_m)$.

\begin{rem}
What we see when employing these four methods is that the invariant lattice seems to be the lattice $L_\cB$. This is certainly true for every invariant lattice computed by methods I,III and IV, though it also seems to be true for each of the others (including the invariant lattice for the automorphisms of prime order and of order 4, 8, 12), though we have not checked every case explicitly. 

\end{rem}

In what follows, we will make use of the following lemma. 

\begin{lem}[cf. \cite{order_six, order_four}] \label{l:rationaltree}
Let $R_1,R_2,\dots, R_s$ be a tree of smooth rational curves on a K3 surface $X$ and $\sigma$ a non-symplectic automorphism of finite order on $X$ leaving each of the $R_i$ invariant. Then the intersection points of the $R_i$'s are fixed by the automorphism and it suffices to know the action at one intersection point to know the action on the entire tree.
\end{lem}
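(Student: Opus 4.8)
The plan is to prove the two assertions separately and to reduce everything to a local eigenvalue computation at each node, where the non-symplectic hypothesis is what makes the propagation deterministic.

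First I would show the intersection points are fixed. In a tree of smooth rational curves, two adjacent curves $R_i,R_j$ are $(-2)$-curves meeting transversally in a single reduced point $p$ (this is what the tree/dual-graph structure encodes). Since $\sigma$ is an automorphism with $\sigma(R_i)=R_i$ and $\sigma(R_j)=R_j$, we have $\sigma(p)\in\sigma(R_i)\cap\sigma(R_j)=R_i\cap R_j=\{p\}$, so $\sigma(p)=p$. Hence every node of the tree is fixed by $\sigma$.

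Next I would carry out the local analysis at a fixed node. At $p=R_i\cap R_j$ choose local analytic coordinates $(x,y)$ with $R_i=\{y=0\}$ and $R_j=\{x=0\}$. Because $\sigma$ fixes $p$ and preserves both branches, the differential $d\sigma_p$ is diagonal in these coordinates, with eigenvalue $\alpha$ on $T_pR_i$ and $\beta$ on $T_pR_j$. Writing the nowhere-vanishing two-form locally as $\omega_X=u(x,y)\,dx\wedge dy$ with $u(p)\neq 0$ and evaluating the identity $\sigma^*\omega_X=\lambda_\sigma\omega_X$ at $p$ gives $\alpha\beta=\lambda_\sigma$. Thus at \emph{every} node the product of the two tangential eigenvalues equals the single global constant $\lambda_\sigma$, so knowing the eigenvalue along one branch forces the eigenvalue along the other. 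I would also record the behaviour on a single curve: $\sigma|_{R_i}$ is a finite-order automorphism of $\PP^1$, hence either the identity (so $R_i$ is fixed pointwise and the tangential eigenvalue at each of its nodes is $1$) or a map with exactly two fixed points whose multipliers $\mu,\mu^{-1}$ are reciprocal; in the latter case the eigenvalue at one fixed point of $R_i$ determines it at the second.

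Finally I would propagate across the tree by induction. Starting from a chosen node $p_0$ on which one eigenvalue of $\sigma$ is given, the relation $\alpha\beta=\lambda_\sigma$ supplies both eigenvalues at $p_0$; transporting the eigenvalue along each adjacent curve to its other fixed point—which is again a node by the first step—determines, via $\alpha\beta=\lambda_\sigma$, the eigenvalue on the next curve, and so on outward. Since the configuration is a tree (connected and acyclic), each curve and node is reached exactly once, so no monodromy or consistency conflict can arise, and the linearized action is pinned down everywhere. I expect the main obstacle to be the two local points that must be handled with care: justifying that adjacent tree components meet in a single reduced transversal point (so that the node and the diagonalization of $d\sigma_p$ are well defined), and correctly deriving and exploiting $\alpha\beta=\lambda_\sigma$. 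The latter is precisely where the non-symplectic hypothesis enters, converting the purely local eigenvalue data at each node into a globally constrained recursion that the tree structure then renders deterministic.
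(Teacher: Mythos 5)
Your proposal is correct and coincides with the standard argument that the paper invokes by citation: the nodes are fixed because $\sigma(p)\in R_i\cap R_j=\{p\}$, the identity $\sigma^*\omega_X=\lambda_\sigma\omega_X$ evaluated at a node gives $\det d\sigma_p=\alpha\beta=\lambda_\sigma$ for the two tangential eigenvalues, and the dichotomy for finite-order automorphisms of $\PP^1$ (identity, or exactly two fixed points with reciprocal multipliers $\mu,\mu^{-1}$) propagates this data through the tree without monodromy issues. You even handle the points usually left implicit, namely that a curve with three or more fixed nodes must be pointwise fixed and that finite order rules out the parabolic (multiplier $1$, non-identity) case, so there is nothing to add.
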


We will also make use of the following formula, giving the genus of a curve of degree $d$ in weighted projective space $\PP(w_1,w_2,w_3)$ in order to compute the genus of the coordinate curves. This formula can be found, e.g., in \cite[Theorem 12.2]{fletcher}.
\begin{equation*}
g(C)=\frac{1}{2}\left(\frac{d^2}{w_1w_2w_3}-d\sum_{1\leq j<i}\frac{\gcd(w_i,w_j)}{w_iw_j}+\sum_{i=1}^3\frac{\gcd(w_i,d)}{w_i}-1\right).
\end{equation*}
With the genus, one can also compute self--intersection numbers for curves on a K3 surface via Riemann--Roch:
\[
2-2g(C)= C\cdot C
\]

\subsection{Proof of main Theorem} 
We now provide the details of the proof of Theorem \ref{t:main_thm}.
For each K3 surface $X_{W,G}$ we have the invariants $(r,q_{S_X(\sigma_m)})$ for the invariant lattice $S_X(\sigma_m)$, as discussed in the previous section. We know the invariant lattice has signature $(1,r-1)$, and so the orthogonal complement has signature $(2,20-r)$. We check that the conditions of Corollary \ref{c:U+T} are fulfilled so that $S_X(\sigma_m)^\perp_{L_{K3}}\cong U\oplus M$ for some lattice $M$\footnote{In one case, the conditions are not fulfilled, namely $m=6$, $r=1$. However, in this case, we know the lattice $U\oplus\langle4\rangle$ has the given invariants, and by Corollary~\ref{c:latunique} this is the only such lattice}. This lattice $M$ is hyperbolic with signature $(1,19-r)$ and has discriminant quadratic form $-q_{S_X(\sigma_m)}$. One can see from the tables that the conditions of Theorem~\ref{t:lattice_unique} are satisfied. Hence, there is exactly one lattice with these invariants. To prove the theorem, therefore, we need simply to check that $(20-r, -q_{S_X(\sigma_m)})$ are the invariants for the invariant lattice $S_{X^T}(\sigma_m^T)$ for $X_{W^T,G^T}$. This can be checked by consulting the tables in Section \ref{sec:tables}. 
This concludes the proof. 

Tables contain all possible invertible polynomial of the from \eqref{eq-W} with non--symplectic automorphism of order $m$, and for each polynomial, we list the orders of the possible groups $G/J_W$ satisfying $J_W\subset G\subset \SL_W$.
In most cases $\SL_W/J_W$ is cyclic, so the properties of $G^T$ make it clear what the dual group is. However for two examples, one can see by the multiplicity of subgroups with the same order that the group $\SL_W/J_W$ is not cyclic. These two examples are $x^3+y^3+z^6+w^6$ (number 3d) and $x^2+y^4+z^6+w^{12}$ (number 8d) in Table \ref{tab-6}. In these cases, we will clear up any ambiguities in the following sections.

From now on we will make a change of notation from $(x_1,x_2,x_3,x_4)$ to $(x,y,z,w)$ for the variables of $W$, so that the variables are arranged with the weights in nonincreasing order. In other words, it is possible that $x_1$ corresponds to any of $x,y,z$, or $w$. This convention is also used in Tables of Section \ref{sec:tables}.

\begin{rem}
It is possible that a given K3 surface admits a purely non--symplectic automorphism of different orders. It turns out that it doesn't matter which automorphism one uses to exhibit LPK3 mirror symmetry, the notion still agrees with BHK mirror symmetry, in the sense of Theorem~\ref{t:main_thm}, as long as the defining polynomial is of the proper form \eqref{eq-W}. We expect that the theorem still holds for K3 surfaces that don't take the form of \eqref{eq-W}, but that is a topic for further investigation. 
\end{rem}

\section{Examples}
\label{sec-ex}
In this section we will first give examples to illustrate each of the methods that was used to determine $S_X(\sigma_m)$. Then we will describe the subgroups in the two cases where $\SL_W/J_W$ is not cyclic.

\begin{ex}\label{ex-method1}Method I:

This first example will illustrate Method I for determining $S_X(\sigma_m)$. Let us consider the K3 surface with equation 
\[
W=x^2+y^3+z^9+yw^{12}=0
\] 
in the weighted projective space $\PP(9,6,2,1)$ with degree 18. This is number 12b in Table~\ref{tab-9}. There are two non--symplectic automorphisms of interest $\sigma_2:(x,y,z,w)\mapsto(-x,y,z,w)$ and $\sigma_9:(x,y,z,w)\mapsto(x,y,\mu_9z,w)$. The invariant lattice $S_X(\sigma_2)$ was dealt with in \cite{ABS}, so we focus on $\sigma_9$.


Here $|G_W|=36\cdot 18$, $|J_W|=18$ and the weight system for the BHK mirror is $(18,11,4,3;36)$ so that $[G_{W}:\SL_W]=36$. Thus $|\SL_W/J_W|=1$.

Looking at the action of $\Cstar$ on the weighted projective space $\PP(9,6,2,1)$, we find the following isotropy:
\begin{align*}
\mu_3 &: \text{fixes }z=w=0, x^2+y^3=0\\
\mu_2 &: \text{fixes }x=w=0, y^3+z^9=0.
\end{align*}
The first row provides a single point with $\Zfin{3}$ isotropy ($A_2$ singularity), and the second provides three points each with $\Zfin{2}$ isotropy (3 $A_1$'s). 
Their resolution gives the configuration of curves on $X_{W,G}$ depicted in Figure \ref{fig:res}. In this depiction, we have not indicated the three intersection points between $C_x$ and $C_z$.

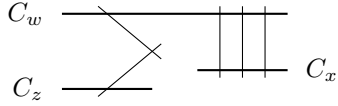
\begin{figure}[ht]
\centering\begin{tikzpicture}[xscale=.6,yscale=.5]
\draw [thick](-1,0)--(1,0);
\node [left] at (-1.2,0){$C_z$};
\draw [thick](-1,2)--(4,2);
\node [left] at (-1.2,2){$C_w$};
\draw [thick] (2,0.5)--(4,0.5);
\node [right] at (4.2,0.5){$C_x$};
\draw (-0.2,-0.2)--(1.2,1.2);
\draw (1.2,0.8)--(-0.2,2.2);
\draw (3.5,0.3)--(3.5,2.2);
\draw (3.0,0.3)--(3.0,2.2);
\draw (2.5,0.3)--(2.5,2.2);
\end{tikzpicture}
\caption{Resolution of singularities on $X_W$}
\label{fig:res}
\end{figure}


The set $\cE$ consists of these five exceptional curves. Denote by $E_1$, $E_2$ and $E_3$ the three $A_1$ fibers, and $E_4$ and $E_5$ the two curves in the $A_2$ fiber. Looking at the form of $W$, we see that the curves $C_x=\{x=0\}$, $C_z=\{z=0\}$ and $C_w=\{w=0\}$ are smooth. The curve $C_y=\{y=0\}$ is not smooth. Thus the set $\cK$ consists of these three smooth curves. The curve $C_x$ has genus 7, $C_z$ has genus 1, and $C_w$ 
has genus 0.  

There are two important representatives of the coset $\sigma_9J_W$ in $G_W$ which will help us compute the fixed locus for $\sigma_9$, namely $(0,0,\tfrac 19,0)$ and $(0,\tfrac 23,0,\tfrac 49 )$. These representatives show us that the curve $C_z=\{z=0\}$ is fixed and the point defined by $\{w=y=0, x^2+z^9=0\}$ is also fixed. 
Because $C_z$ is fixed, $E_4$ and $E_5$ are invariant (though not fixed pointwisely). 
The point of intersection of $C_w$ and the $A_2$ exceptional fiber and this other point are the only fixed points on $C_w$. Thus the three $A_1$ singularities are permuted by the action. By Lemma \ref{l:rank}, since there are three orbits, $S_X(\sigma_9)$ has rank 4. 

We now compute the lattice $L_\cB$, generated by $\cB=\set{E_1+E_2+E_3, E_4,E_5, C_x,C_z,C_w}$. Since there are six generators, two of them are redundant, for example $C_x$ and $C_z$. 
Consider the lattice $L$ generated by $E_1+E_2+E_3$, $E_4$, $E_5$, and $C_w$. This lattice has bilinear form 
\[
\left(\begin{matrix}
-6 &0 &0 &3\\ 
0 &-2 &1 &0\\ 
0 &1 &-2 &1\\ 
3 &0 &1 &-2\\           
\end{matrix}\right) 
\]  which has discriminant form $\omega_{3,1}^1$. 
By Proposition \ref{p:overl}, there are no non--trivial even overlattices of this lattice, hence $L=L_\cB=S_X(\sigma_9)$. Thus we have the invariants $(r,q_{S_X(\sigma_9)})=(4,w_{3,1}^{1})$. In fact, $S_X(\sigma_9)\cong U\oplus A_2$.
\end{ex}

\begin{rem}
This method also yields some other interesting facts regarding the Picard lattice of these surfaces. 
In \cite{belcastro}, Belcastro computes the Picard lattice for a generic hypersurface with these weights and degree as $U\oplus D_4$. However, if we look at the non--symplectic automorphism $\sigma_9^3$, we 
can compute the invariants $g=1,n=4,k=1$, and therefore $r=10, a=4$ for the invariant lattice, giving us the invariant lattice $S_X(\sigma_9^3)=U\oplus A_2 \oplus E_6$. This shows us in particular, that the Picard lattice of this 
surface is bigger than the Picard lattice for a generic quasihomogeneous polynomial with these weights. 
\end{rem}

\begin{ex}\label{ex-method2}Method II:

In order to illustrate Method II, we repeat the computations for the BHK mirror of the previous example:
\[
W^T=x^2+y^3w+z^9+w^{12}
\] 
with weight system $(18,11,4,3;36)$. This is number 43a in Table~\ref{tab-9}. Here again $|\SL_W/J_W|=1$. As before, we also have an involution, but we consider only $\sigma_9^T$.  

Looking at the action of $\Cstar$ on $\CC^4$ and resolving the singularities we have 
an $A_{10}$ given by resolving the point $(0,1,0,0)$,
$2A_2$ coming from the two points with $y=z=0$ fixed by $\mu_3$ and 
an $A_1$ coming from the point with $y=w=0$ fixed by $\mu_2$.
This time $\cE$ has 15 curves and $\cK=\set{C_x,C_y,C_z}$ as in Figure~\ref{fig:res9}. Again we do not depict the three points of intersection between $C_x$ and $C_y$.  

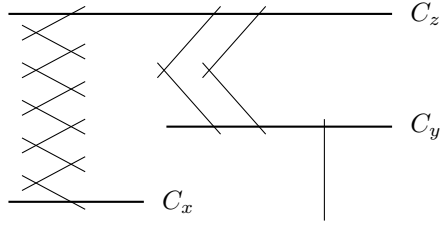
\begin{figure}[ht]
\centering\begin{tikzpicture}[xscale=.6,yscale=.5]
\draw [thick](2,1)--(7,1);
\node [right] at (7.2,1){$C_y$};
\draw [thick](-1.5,4)--(7,4);
\node [right] at (7.2,4){$C_z$};
\draw [thick] (-1.5,-1)--(1.5,-1);
\node [right] at (1.7,-1){$C_x$};
\draw (3.2,4.2)--(1.8,2.3);
\draw (1.8,2.7)--(3.2,0.8);
\draw (4.2,4.2)--(2.8,2.3);
\draw (2.8,2.7)--(4.2,0.8);
\draw (5.5,1.2)--(5.5,-1.5);
\draw (0.2,-1.2)--(-1.2,-0.3);
\draw (-1.2,-0.7)--(0.2,0.2);
\draw (0.2,-0.2)--(-1.2,0.7);
\draw (-1.2,0.3)--(0.2,1.2);
\draw (0.2,0.8)--(-1.2,1.7);
\draw (-1.2,1.3)--(0.2,2.2);
\draw (0.2,1.8)--(-1.2,2.7);
\draw (-1.2,2.3)--(0.2,3.2);
\draw (0.2,2.8)--(-1.2,3.7);
\draw (-1.2,3.3)--(0.2,4.2);

\end{tikzpicture}
\caption{Resolution of singularities on $X_{W^T}$}
\label{fig:res9}
\end{figure}

Two relevant representatives of $\sigma_9^T$ in $G_{W^T}/J_{W^T}$ are $(0,0,\tfrac 19,0)$ and $(0,\tfrac 49,0, \tfrac 23)$. From these we see that the curve $C_z$ is fixed. It has genus 0. 
Furthermore the exceptional curve from the $A_1$ singularity at $y=w=0$ is fixed pointwisely, as well as one of the curves in the exceptional $A_{10}$. Since $C_z$ is fixed, each of the exceptional curves is left invariant under $\sigma_9^T$. From Lemma \ref{l:rank}, the rank of the invariant lattice $S_{X^T}(\sigma_9^T)$ is $r=16$.

In this case, we compute the invariant lattice for $(\sigma_9^T)^3$, which is a non--symplectic automorphism of order 3. 
The curves $C_z$ and $C_y$ are fixed; both have genus zero. Three of the curves on the $A_{10}$ chain are also fixed, as in Lemma~\ref{l:rationaltree}. Furthermore, the remainin intersection points of the chains of exceptional curves are fixed, and an additional point on the $A_1$. Thus the invariants are $(g,n,k)=(1,4,7)$. 
Using the results cited in Section \ref{s:methodsprime}, the invariants for the 3--elementary lattice $(\sigma_9^T)^3$ are $(16,1)$. Since $S_{X^T}(\sigma_9^T)$ is a primitive sublattice of this 3--elementary lattice, and both have the same rank, they are equal. Therefore we have invariants $(r,q)=(16, w_{3,1}^{-1})$ and the lattice is $S_X(\sigma_9)=U\oplus E_6\oplus E_8$. 

\end{ex}

Comparing the ranks, and noticing that $\omega_{3,1}^1=-\omega_{3,1}^{-1}$, we see the BHK mirror matches the LPK3 mirror symmetry.

\begin{ex} Method III:\label{ex-meth3}

Let $W:=x^2+y^4+yz^4+w^{16}$ with $m=16$ in weight system $(8,4,3,1;16)$. This is number 37b in Table \ref{tab-16}. The order of $\SL_W/J_W$ is 2. This appears to be the same K3 surface investigated in \cite[Example~3.2]{order_sixteen}.  
Computing singularities we obtain an $A_2$ at the point $(0:0:1:0)$ and 
two $A_3$'s at the two points with $z=w=0$.
Resolving these, we obtain the configuration of curves showed in Figure \ref{fig:resIII}. The curves $C_x$ and $C_z$ intersect in four points, which are not depicted. 
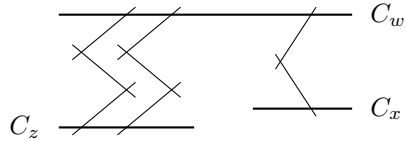
\begin{figure}[ht]
\centering\begin{tikzpicture}[xscale=.6,yscale=.5]

\draw [thick](-1.5,4)--(5,4);
\node [right] at (5.2,4){$C_w$};
\draw [thick] (-1.5,1)--(1.5,1);
\node [left] at (-1.7,1){$C_z$};
\draw [thick] (2.8,1.5)--(5,1.5);
\node [right] at (5.2,1.5){$C_x$};
\draw (4.2,4.2)--(3.3,2.55);
\draw (3.3,2.95)--(4.2,1.3);

\draw (1.2,4.2)--(-0.2,2.8);
\draw (-0.2,3.2)--(1.2,1.8);
\draw (1.2,2.2)--(-0.2,0.8);

\draw (0.2,4.2)--(-1.2,2.8);
\draw (-1.2,3.2)--(0.2,1.8);
\draw (0.2,2.2)--(-1.2,0.8);
\end{tikzpicture}
\caption{Resolution of singularities for $X_W$}
\label{fig:resIII}
\end{figure}
The genus of the curve $C_w$ is 0, the genus of $C_z$ is 1, the genus of $C_x$ is 6. However, $C_y$ consists of two components, each a copy of $\PP^1$.

The automorphism $\sigma_{16}=(0,0,0,\tfrac{1}{16})$ fixes $C_w$, and therefore leaves all of the exceptional curves invariant. Thus we have $|\cE/\sigma_{16}|=8$, and $r=9$. Furthermore, $\cK$ consists of the curves $C_w$, $C_z$ and the two curves that make up $C_y$. 
Using an explicit form of the intersection matrix, one can check that the lattice $L_\cB$ is actually generated by the exceptional curves and $C_w$. 
one sees that it is a lattice of type $T_{3,4,4}$. The discriminant group of $T_{3,4,4}$ is $\mathbb Z/8\mathbb Z$ and the corresponding form $q$ is $w^{5}_{2,3}$. This form has one overlattice. However, Belcastro has computed the Picard Lattice for a general member of the family of K3 surfaces with this weight system as $T_{3,4,4}$. Thus $L_\cB\cong T_{3,4,4}$ embeds primitively into $S_X(\sigma_{16})$ and so they are equal, e.g. $S_X(\sigma_{16})=L_\cB$ with invariants $(r,q)=(9,w^5_{2,3})$. 

\end{ex}

\begin{rem}
There is another case with with the same invariant lattice in the same weight system, namely number 37a. The reasoning is similar to what we have just outlined. 
\end{rem}

Finally, we will describe both of the cases requiring what we have called Method IV. These two cases are similar in that we use the Picard lattice to help determine $S_X(\sigma_{m})$. We will need the following proposition. 

\begin{prop}[{\cite[Prop. 1.15.1]{nikulin}}]\label{p:primembedding}
The primitive embeddings of a lattice $L$ into an even lattice with invariants $(m_+,m_-,q)$ are determined by the sets $(H_L, H_q,\gamma;K,\gamma_K)$, where $H_L\subset A_L$ and $H_q\subset A_q$ are subgroups, $\gamma:q_S|_{H_S}\to q|_{H_q}$ is an isomorphism of subgroups preserving the quadratic forms to these subgroups, $K$ is an even lattice with invariants $(m_+-t_+, m_--t_-, -\delta)$, where $\delta\cong q_S\oplus (-q)|_{\Gamma_\gamma^\perp/\Gamma_\gamma}$, $\Gamma_\gamma$ being the pushout of $\gamma$ in $A_S\oplus A_q$, and, finally, $\gamma_K:q_K\to (-\delta)$ is an isomorphism of quadratic forms. 
\end{prop}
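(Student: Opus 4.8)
The plan is to realize every primitive embedding through its orthogonal complement and then translate the gluing data into the invariants of the ambient form $q$. Concretely, suppose $L\hookrightarrow T$ is a primitive embedding into an even lattice $T$ with invariants $(m_+,m_-,q)$, and set $K:=L^\perp_T$. The first step is to record that $K$ is again primitive in $T$ and that $T$ is an overlattice of $L\oplus K$; by Proposition~\ref{p:overl} this overlattice is classified by an isotropic subgroup $\Gamma=T/(L\oplus K)\subseteq A_L\oplus A_K$, i.e. a subgroup on which $q_L\oplus q_K$ vanishes.

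The second step is the key structural observation: primitivity of $L$ in $T$ is equivalent to injectivity of the projection $\Gamma\to A_K$, and symmetrically primitivity of $K$ is equivalent to injectivity of $\Gamma\to A_L$. I would prove this by unwinding the definitions: an element of the kernel of $\Gamma\to A_K$ is represented by some $t\in T\cap(L\otimes\QQ)$, which lies in $L$ precisely when $L$ is primitive. Since here both $L$ and $K$ are primitive, $\Gamma$ is therefore the graph of an isomorphism $\gamma_0\colon H_L\to H_K$ between subgroups $H_L\subseteq A_L$ and $H_K\subseteq A_K$, and isotropy of $\Gamma$ forces $\gamma_0$ to be an anti-isometry $q_L|_{H_L}\cong(-q_K)|_{H_K}$. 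This already establishes a bijection between primitive embeddings (up to isometries of $T$ fixing $L$) and quadruples $(K,H_L,H_K,\gamma_0)$.

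The third step converts this complement-side data into the form stated in the proposition, i.e. into data phrased in terms of the given form $q=q_T$. Using the second half of Proposition~\ref{p:overl}, $A_T\cong\Gamma^\perp/\Gamma$ and $q_T=(q_L\oplus q_K)|_{\Gamma^\perp}/\Gamma$; pushing $\gamma_0$ through this identification produces a subgroup $H_q\subseteq A_q$ together with an isometry $\gamma\colon q_L|_{H_L}\cong q|_{H_q}$, whose graph $\Gamma_\gamma\subseteq A_L\oplus A_q$ is isotropic for $q_L\oplus(-q)$. Running the same discriminant-form computation in reverse then expresses the complement form as $q_K\cong-\delta$, where $\delta=(q_L\oplus(-q))|_{\Gamma_\gamma^\perp/\Gamma_\gamma}$, so that $K$ must be an even lattice with invariants $(m_+-t_+,m_--t_-,-\delta)$ and $\gamma_K\colon q_K\cong-\delta$ merely records this identification. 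Conversely, given any tuple $(H_L,H_q,\gamma;K,\gamma_K)$ of this shape, the existence and essential uniqueness of a lattice $K$ realizing the prescribed invariants (Theorem~\ref{t:lattice_unique} and Corollary~\ref{c:latunique}) let me reconstruct $\Gamma$, and hence $T$ together with its primitive copy of $L$.

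I expect the main obstacle to be the bookkeeping in the third step: verifying that the two passages through Proposition~\ref{p:overl} (once to build $T$ from $L\oplus K$, once to read off $q_T$) are mutually inverse at the level of finite quadratic forms, and in particular that the subquotient $\Gamma_\gamma^\perp/\Gamma_\gamma$ carries exactly the form $-q_K$. Establishing that the correspondence is genuinely well defined up to isometry, i.e. that different admissible choices of $\Gamma$, or of an abstract $K$ realizing $-\delta$, yield isometric embeddings, is the other delicate point, and it is precisely where the uniqueness hypotheses of Theorem~\ref{t:lattice_unique} do the real work.
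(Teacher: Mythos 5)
The paper offers no proof of this statement at all --- it is quoted directly from Nikulin \cite{nikulin} --- so your attempt can only be measured against Nikulin's argument. Your first two steps are correct and are exactly how that argument begins: setting $K=L^\perp_T$, viewing $T$ as an overlattice of $L\oplus K$ classified by an isotropic subgroup $\Gamma\subseteq A_L\oplus A_K$ via Proposition~\ref{p:overl}, and observing that primitivity of $L$ (resp.\ $K$) is equivalent to injectivity of the projection $\Gamma\to A_K$ (resp.\ $\Gamma\to A_L$), so that $\Gamma$ is the graph of an anti-isometry $\gamma_0\colon q_L|_{H_L}\to (-q_K)|_{H_K}$. The genuine gap is in your third step, which is where the actual content of the proposition lives. ``Pushing $\gamma_0$ through the identification $A_q\cong\Gamma^\perp/\Gamma$'' is not a construction: $H_L$ does not sit inside $\Gamma^\perp/\Gamma$. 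Indeed, for $x\in H_L$ the element $(x,0)$ lies in $\Gamma^\perp$ only when $x\in H_L\cap H_L^{\perp}$, and modulo $\Gamma$ one has $(x,0)\equiv -(0,\gamma_0 x)$; so there is no evident map $H_L\to A_q$, hence no definition of $H_q$ or of the isometry $\gamma$, and the key identity $q_K\cong-\delta$ with $\delta=(q_L\oplus(-q))|_{\Gamma_\gamma^\perp}/\Gamma_\gamma$ is not ``the same computation run in reverse'' --- it is precisely what must be proved.

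The standard device that fills this gap (and is essentially what Nikulin's treatment rests on) is an auxiliary lattice: choose an even lattice $N$ of suitable signature with $q_N\cong -q$, and glue $T\oplus N$ along the full graph of an anti-isometry $A_q\to A_N$ to obtain an even unimodular lattice $\Lambda$. Then apply your steps 1--2 a second time, now inside $\Lambda$: since $K$ is primitive in $\Lambda$ and $\Lambda$ is unimodular, $K^\perp_\Lambda$ has discriminant form $-q_K$; it is an overlattice of $L\oplus N$ in which both summands are primitive, so its gluing subgroup is the graph $\Gamma_\gamma$ of an isometry $\gamma\colon q_L|_{H_L}\to q|_{H_q}$ (this is where $H_q$ actually comes from, using $q_N=-q$); and Proposition~\ref{p:overl} applied to this overlattice yields exactly $-q_K\cong (q_L\oplus(-q))|_{\Gamma_\gamma^\perp}/\Gamma_\gamma=\delta$. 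Without this (or an equally explicit finite-form computation) the central formula remains unproved. A secondary error: in the converse direction you invoke Theorem~\ref{t:lattice_unique} and Corollary~\ref{c:latunique} to ``reconstruct'' $K$, but $K$ together with $\gamma_K$ is part of the given data, and those results carry signature and rank-versus-length hypotheses that need not hold in the generality of this proposition, so they can do no work here; the converse is instead the auxiliary construction run backwards, recovering the ambient lattice as $N^\perp_\Lambda$ together with its primitive copy of $L$.
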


From this proposition, we can determine all primitive embeddings of one even lattice into another. We will use this in the next example. 

\begin{ex}Method IV:\label{ex-meth41}

We now consider the BHK dual to the previous example. This is the first entry for 37b in Table~\ref{tab-16}. As mentioned in the introduction, this provides an example to the case in \cite{order_sixteen}, where no example could be found. In this case, we have 
\[
W^T=W=x^2+y^4+yz^4+w^{16},
\]
and $G^T=\SL_W$, and we know $|\SL_W/J_W|=2$. In fact the group is generated by $(\tfrac{1}{2},0, \tfrac{1}{2},0)$, so we see that the points with $x=z=0$ are fixed. Another representative in the same coset is $(\tfrac{1}{2},0, 0, \tfrac{1}{2})$. Thus we see that the intersection points on the $A_2$ chain from the previous example are fixed, as well as other point with $x=w=0$. The two $A_3$ chains are permuted by the action. Thus on $X_{W^T,G^T}$ we get the configuration of curves of Figure \ref{fig:res16}.

\begin{figure}[ht]
\centering\begin{tikzpicture}[xscale=.6,yscale=.5]

\draw [thick](-1.5,4)--(5,4);
\node [right] at (5.2,4){$C_w$};
\draw [thick] (-1.5,1)--(1.5,1);
\node [left] at (-1.7,1){$C_z$};
\draw [thick] (0,-1)--(5,-1);
\node [right] at (5.2,-1){$C_x$};
\draw (3.2,4.2)--(2.3,2.8);
\draw (2.3,3.2)--(3.2,1.8);
\draw (3.2,2.2)--(2.3,0.8);
\draw (2.3,1.2)--(3.2,-0.2);
\draw (3.2,0.2)--(2.3,-1.2);

\draw (0.2,4.2)--(-1.2,2.8);
\draw (-1.2,3.2)--(0.2,1.8);
\draw (0.2,2.2)--(-1.2,0.8);

\draw (4,4.2)--(4,-1.2);

\draw (1,1.2)--(1,-1.2);
\draw (.8,1.2)--(.8,-1.2);
\draw (.6,1.2)--(.6,-1.2);
\draw (.4,1.2)--(.4,-1.2);

\end{tikzpicture}
\caption{Resolution of singularities on $X_{W^T,G^T}$}
\label{fig:res16}
\end{figure}
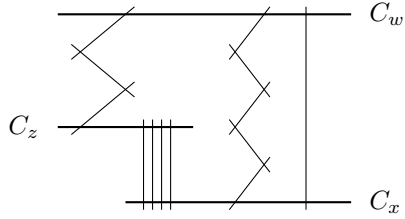

Using the Riemann--Hurwitz Theorem, we can compute the genus of the coordinate curves. The curve $C_x$ is covered by a curve of genus 6 with 6 fixed points, so it has genus 2. Similarly, we see that the genus of $C_w$ is 0 and the genus of $C_z$ is 0. 
The two components of the curve $\set{y=0}$ from the previous example are permuted, to give us $C_y$ of genus 0. 

The non--symplectic automorphism $\sigma_{16}=(0,0,0,\tfrac{1}{16})$, fixes $C_w$, and therefore the chains of exceptional curves intersecting $C_w$ are invariant. It is not difficult to see also that the four exceptional curves intersecting $C_x$ and $C_z$ are permuted. Thus $r=11$. 
One can check that $C_z$, $C_y$ and $C_x$ are superfluous, giving us a lattice $L_\cB$ 
%
with discriminant form $w^{-5}_{2,3}$. 

There is one isotropic subgroup $H$ and hence one overlattice of $L_\cB$. By Proposition~\ref{p:overl} this overlattice has discriminant form $\omega^{-1}_{2,1}$.
Since $S_X(\sigma_{16})$ is an overlattice of $L_\cB$, the two possibilities for $S_X(\sigma_{16})$ are $U\oplus E_8\oplus A_1$ or $T_{2,5,6}$. Using Proposition~\ref{p:primembedding} we will show that $U\oplus E_8\oplus A_1$ does not embed primitively into $S_{X_{W^T,G^T}}$, so that $S_X(\sigma_{16})=T_{2,5,6}$. 

In \cite{order_sixteen} Al Tabbaa-Sarti-Taki have computed the Picard Lattice for K3 surfaces with non--symplectic automorphisms of order 16, and found that in our case, the Picard lattice is $U(2)\oplus D_4\oplus E_8$. This lattice is 2--elementary with $u\oplus v$ as discriminant quadratic form. In particular, this quadratic form takes values 0 or 1 (i.e. $\delta=0$). 

On the other hand, $\omega_{2,1}^{-1}$ has value $\tfrac{3}{2}$ on the generator for $\ZZ/2\ZZ$. By Proposition~\ref{p:primembedding}, a primitive embedding of $U\oplus E_8\oplus A_1$ into the Picard lattice $U(2)\oplus D_4\oplus E_8$ must therefore correspond to the trivial subgroup. The existence of such a primitive embedding depends on the existence of an even lattice with invariants $(0,3,u\oplus v\oplus \omega_{2,1}^1)$. The length of this discriminant quadratic form is 5, whereas the rank of the desired lattice is 3, and so no such lattice exists (see \cite[Theorem 1.10.1]{nikulin}). 

We conclude that the invariant lattice is $S_X(\sigma_{16})\cong T_{2,5,6}$, which has invariants $(11, \omega_{2,3}^{-5})$. 
\end{ex}

\begin{rem}
The other case with $m=16$, $r=11$ is number 58 in Table~\ref{tab-16}. The method for computing the invariant lattice in this case is very similar to what we have just computed. Alternatively, that case can also be computed with Method III. 
\end{rem}

\begin{ex}Method IV:\label{ex-meth42}

The other case that requires Method IV is $m=9$, $r=12$. This occurs for two of the K3 surfaces, namely 18a and 18b, both instances using the group $\SL_W$. Both of these cases are similar, so we describe only the first. 

Using methods similar to those described in the previous examples, we get the configuration of curves depicted in Figure~\ref{fig:resoIV}.   
\begin{figure}[ht]
\centering\begin{tikzpicture}[xscale=.6,yscale=.5]
\draw [thick](-1,-2)--(6,-2);
\node [left] at (-1.2,-2){$C_z$};
\draw [thick](-1,2)--(6,2);
\node [left] at (-1.2,2){$C_w$};
\draw [thick] (1,0)--(6,0);
\node [right] at (6.2,0){$C_x$};
\draw (-1.2,0.2)--(0.2,-2.2);
\draw (-1.2,-.2)--(0.2,2.2);
\draw (3.1,0.8)--(4,2.2);
\draw (4,-0.3)--(3.1,1.2);

\draw (4.3,1.5)--(5.2,2.2);
\draw (5.2,1.1)--(4.3,1.7);
\draw (4.3,0.7)--(5.2,1.3);
\draw (5.2,0.3)--(4.3,0.9);
\draw (4.3,-0.2)--(5.2,0.5);

\draw (1.3,0.2)--(.6,-1.2);
\draw (.6,-0.8)--(1.3,-2.2);

\draw (1.8,0.2)--(1.1,-1.2);
\draw (1.1,-0.8)--(1.8,-2.2);

\draw (2.3,0.2)--(1.6,-1.2);
\draw (1.6,-0.8)--(2.3,-2.2);

\end{tikzpicture}
\caption{Resolution of curves on $X_{W,G}$ for example 18a}
\label{fig:resoIV}
\end{figure}
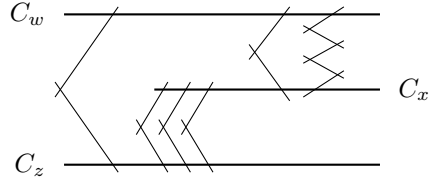
For the discussion that follows, we denote by $E_1$ the exceptional curve in the $A_5$ chain, which intersects $C_x$. 

The automorphism $\sigma_9$ permutes the three $A_2$ chains (yielding one orbit for each curve in the chain for a total of 2 orbits), but leaves the other nine exceptional curves, as well as the coordinate curves, invariant. This gives us $r=12$, and we can compute the lattice $L_\cB\cong M_9\oplus A_2\oplus E_8$ with discriminant quadratic form $\omega_{3,1}^1\oplus\omega_{3,2}^1$. There is one isotropic subgroup of this lattice, corresponding to the overlattice $U\oplus A_2\oplus E_8$. So we must find some way to show that $L_\cB$ is primitively embedded in the Picard lattice, for then it is the invariant lattice.  

We can determine the Picard lattice $S_{X_{W,G}}$. We first notice that $\sigma_9^3$ has order 3. Furthermore, its fixed locus has invariants $(g,n,k)=(0,3,7)$. Therefore its invariant lattice $S_X(\sigma_9^3)$ is a 3-elementary lattice with invariants $(16,3)$, i.e. it is the lattice $U\oplus E_8\oplus 3A_2$, with discriminant quadratic form $3\omega_{3,1}^{1}$. Since the transcendental lattice $S_{X_{W,G}}^\perp$ has order divisible by $\phi(9)=6$, $S_X(\sigma_9^3)$ is the Picard lattice. 

In fact, we 
will determine a basis for $S_{X_{W,G}}$. Consider the set $\cE_1$ consisting of all of the exceptional curves (15 of them), and $\cK$ consisting of all irreducible components of (the strict transforms of) the coordinate curves. Let $\cB_1=\cE_1\cup\cK$. This set generates $S_{X_{W,G}}$. One can check by direct computation (\cite{magma}) that $C_x$ and $E_1$ are redundant. 

Now we consider the set $\cB$, generating the lattice $L_\cB$. Again, we compute that $C_x$ and $E_1$ are redundant, so we get $L_\cB$ generated by the two orbits from the $A_2$ chains, the remaining exceptional curves, and $C_w$ and $C_z$. Two of the generators for $L_\cB$ are just sums of generators of $S_{X_{W,G}}$. Thus a change of basis shows that $S_X/L_\cB$ is a free group of rank 4, and so $L_\cB$ is primitively embedded. 
\end{ex}

The other example is similar. Instead of three $A_2$ chains, there are three $A_1$'s. One can check that the set $\set{y=0}$ is composed of three curves, each of genus zero. Each of these curves intersects one of the $A_1$ curves. These are permuted by the action of $\sigma_9$. Up to a relabelling, we obtain the same configuration of curves, and the same Picard lattice.  






\label{ex-noncyclic}

The only cases where $\SL_W/J_W$ is not cyclic are $x^3+y^3+z^6+w^6$ (number 3d) and $x^2+y^4+z^6+w^{12}$ (number 8d) in Table~\ref{tab-6}.
We analyze them separately.

\begin{ex}
The first polynomial we consider is $W=x^3+y^3+z^6+w^6$ in $\PP(2,2,1,1)$. This is number 3d in Table~\ref{tab-6}.

In this case the order of $\SL_W/J_W$ is 9 and it results that $\SL_W/J_W=\ZZ/3\ZZ\oplus\ZZ/3\ZZ$ since there are no elements of order 9.  
The group $J_W$ is generated by $j_W=(\frac12,\frac13,\frac16,\frac16)$ and two generators for $\SL_W/J_W$ are $g_1=(\frac13,\frac23,0,0)$ and $g_2=(\frac13,\frac13,\frac13,0)$.
Also we name $g_3=(\frac13,0,\frac23,0)$ and $g_4=(0,\frac13,0,\frac23)$.
There are four subgroups of $\SL_W/J_W$ of order 3, 
namely $G_i=<g_i,\J{W}>, i=1,2,3,4$.
We can also observe that $G_1^T=G_2$, $G_3^T=G_3$ and $G_4^T=G_4$.

Now we consider the non-symplectic automorphism $\sigma_{6}=(0,0,0,\tfrac{1}{6})$. One may notice, there is another automorphism of order 6, namely $(0,0,\tfrac{1}{6},0)$, but due to symmetry (i.e. exchanging $z$ and $w$), we must only consider one of them. 

Using the same methods as before for each group, we can compute the invariant lattice for the corresponding K3 surface. In each case, the lattice $L_\cB$ has no overlattices, so we can use Method I. When $G=G_3$, the invariant lattice has $(r,q)=(10,4w_{3,1}^1)$, which is self-dual. The same is true for $G=G_4$.

When $G=G_1$ we get an invariant lattice with rank 16 and the discriminant form is $v\oplus w_{3,1}^1$. This is the dual of the invariant lattice we get with the choice $G=G_2$ and so it proves the theorem for this case.
\end{ex}

\begin{ex}
Finally, we examine the polynomial $W=x^2+y^4+z^6+w^{12}$ in $\PP(6,3,2,1)$. This is number 8d in Table~\ref{tab-6}. There are non--symplectic automorphisms of order 2, 4 and 12, but we again focus on the non-symplectic automorphism of order 6: $\sigma_6=(0,0,\tfrac{1}{6},0)$.  


The order of $\SL_W/J_W$ is 4 and since there are no elements of order 4, we conclude that $\SL_W/J_W=\ZZ/2\ZZ\oplus\ZZ/2\ZZ$.
The elements
\[g_1=\left(\frac12,0,\frac12,0\right),\ g_2=\left(0,\frac12,\frac12,0\right),\ g_3=\left(\frac12,\frac12,0,0\right)\]
each have order 2 and represent different cosets in $\SL_W/J_W$; let $G_i:=<g_i,\J{W}>,i=1,2,3$.
Observe that $G_1^T=G_1$ while $G_2^T=G_3$.

When $G=G_1$, with Method I we compute the invariant lattice and obtain $(r,q)=(10,v\oplus 4w_{2,1}^{-1})$.

As for $G_2$, we use again Method I and obtain $(r,q)=(14,2w_{2,1}^{-1}\oplus w_{3,1}^1)$, while for $G_3$ we get $(r,q)=(6,2w_{2,1}^{1}\oplus w_{3,1}^{-1})$.
Observing they are mirror of each other, we can conclude that the Theorem is proved in this case.
\end{ex}

\section{Tables}
\label{sec:tables}
\footnotesize{
In each table, we have arranged the surfaces by weight system. Each weight system is listed by the number assigned to it by Yonemura in \cite{yonemura}. In each weight system, we have listed all possible invertible polynomials of the form \eqref{eq-W} with non--symplectic automorphism of order $m$, and for each polynomial, we list the orders of the possible groups $G/J_W$ satisfying $J_W\subseteq G\subseteq \SL_W$. The invariants $(r,q_{S_X(\sigma_m)})$ are then given, as well as the number of the BHK mirror dual. Finally, we have also indicated which method was used to determine $q_{S_X(\sigma_m)}$.  

When consulting the tables, it will be helpful to know that $\omega_{5,1}^\epsilon=-\omega_{5,1}^\epsilon$, and that $4\omega_{3,1}^{-1}=4\omega_{3,1}^1$, $4\omega_{2,1}^{-1}=4\omega_{2,1}^1$. The first fact follows simply by definition. The latter two follow from \cite[Theorem 1.8.2]{nikulin} 

\begin{longtable}{p{12pt}|c|c|c|c|c|c|c}
\multicolumn{7}{c}{}\\
No. &Weights&Polynomial&SL/J&G/J&$(r,q)$&BHK dual&Method\\
 \hline
  \midrule
 \endfirsthead
 
 No. &Weights&Polynomial&SL/J&G/J&$(r,q)$&BHK dual&Method\\\hline    \midrule
 \endhead
14& (21,14,6,1;42)&	$x^2+y^3+z^7+w^{42}$&	1&1&$(10,<0>)$&14&I\\

\caption{Table for $m=42$} \label{tab-42} \end{longtable}

\begin{longtable}{p{12pt}|c|c|c|c|c|c|c}
\multicolumn{7}{c}{}\\
No. &Weights&Polynomial&SL/J&G/J&$(r,q)$&BHK dual&Method\\
 \hline
  \midrule
 \endfirsthead
 
 No. &Weights&Polynomial&SL/J&G/J&$(r,q)$&BHK dual&Method\\\hline    \midrule
 \endhead
 38&	(15,8,6,1;30)		&$x^2+y^3z+z^5+w^{30}$&		1&1&$(11,w_{2,1}^{-1})$	&50&I	\\
 50&	(15,10,4,1;30)		&$x^2+y^3+yz^5+w^{30}$&		1&1&$(9,w_{2,1}^{1})$	&38&I	\\	 
 \caption{Table for $m=30$} \label{tab-30} \end{longtable}

\begin{longtable}{p{12pt}|c|c|c|c|c|c|c}
\multicolumn{7}{c}{}\\
No. &Weights&Polynomial&SL/J&G/J&$(r,q)$&BHK dual&Method\\
 \hline
  \midrule
 \endfirsthead
 No. &Weights&Polynomial&SL/J&G/J&$(r,q)$&BHK dual&Method\\\hline    \midrule
 \endhead
13a	&(12,8,3,1;24)	&$x^2+y^3+xz^4+w^{24}$	&1	&1	&$(8,w_{3,1}^{-1})$	&20&I\\
13b	&(12,8,3,1;24)	&$x^2+y^3+z^8+w^{24}$	&2	&2	&$(12, w_{3,1}^{1})$	&13b&I\\
	&			&					&	&1	&$(8,w_{3,1}^{-1})$	&13b&I\\
20	&(9,8,6,1;24)	&$x^2z+y^3+z^4+w^{24}$	&1	&1	&$(12, w_{3,1}^{1})$	&13a&I\\	 
\caption{Table for $m=24$} \label{tab-24} 
\end{longtable}

\begin{longtable}{p{12pt}|c|c|c|c|c|c|c}
\multicolumn{7}{c}{}\\
No. &Weights&Polynomial&SL/J&G/J&$(r,q)$&BHK dual&Method\\
 \hline
  \midrule
 \endfirsthead
 No. &Weights&Polynomial&SL/J&G/J&$(r,q)$&BHK dual&Method\\\hline    \midrule
 \endhead
 78	&(11,6,4,1;22)	&$x^2+y^3z+yz^4+w^{22}$	&1	&1	&$(10,w_{2,1}^{-1}\oplus w_{2,1}^{1})$	&78&II\\
\caption{Table for $m=22$} \label{tab-22}  \end{longtable}

\begin{longtable}{p{12pt}|c|c|c|c|c|c|c}
\multicolumn{7}{c}{}\\
No. &Weights&Polynomial&SL/J&G/J&$(r,q)$&BHK dual&Method\\
 \hline
  \midrule
 \endfirsthead
 No. &Weights&Polynomial&SL/J&G/J&$(r,q)$&BHK dual&Method\\\hline    \midrule
 \endhead
 9a	&(10,5,4,1;20)	&$x^2+xy^2+z^5+w^{20}$	&1	&1	&$(10,w_{5,1}^{-1})$		&9a&I\\
9b	&(10,5,4,1;20)	&$x^2+y^4+z^5+w^{20}$	&2	&2	&$(10,w_{5,1}^{-1})$		&9b&I\\
	&			&					&	&1	&$(10,w_{5,1}^{-1})$		&9b&I\\
 \caption{Table for $m=20$} \label{tab-20} \end{longtable}

\begin{longtable}{p{12pt}|c|c|c|c|c|c|c}
\multicolumn{7}{c}{}\\
No. &Weights&Polynomial&SL/J&G/J&$(r,q)$&BHK dual&Method\\
 \hline
  \midrule
 \endfirsthead
 No. &Weights&Polynomial&SL/J&G/J&$(r,q)$&BHK dual&Method\\\hline    \midrule
 \endhead
12a	&(9,6,2,1;18)	&$x^2+y^3+yz^6+w^{18}$		&2	&2	&$(11,w_{2,1}^1\oplus w_{3,1}^1)$		&39a&I\\
	&			&						&	&1	&$(6,v)$							&39a&I\\
12b	&(9,6,2,1;18)	&$x^2+y^3+z^9+w^{18}$		&3	&3	&$(14,v)$							&12b&I\\
	&			&						&	&1	&$(6,v)$							&12b&I\\
39a	&(9,5,3,1;18)	&$x^2+y^3z+z^6+w^{18}$		&2	&2	&$(14,v)$							&12a&I\\
	&			&						&	&1	&$(9,w_{2,1}^{-1}\oplus w_{3,1}^{-1})$	&12a&I\\
39b	&(9,5,3,1;18)	&$x^2+y^3z+xz^3+w^{18}$	&1	&1	&$(9,w_{2,1}^{-1}\oplus w_{3,1}^{-1})$	&60&I\\
60	&(7,6,4,1;18)	&$x^2z+y^3+yz^3+w^{18}$	&1	&1	&$(11,w_{2,1}^1\oplus w_{3,1}^1)$		&39b&I\\

\caption{Table for $m=18$} \label{tab-18}  \end{longtable}

\begin{longtable}{p{12pt}|c|c|c|c|c|c|c}
\multicolumn{7}{c}{}\\
No. &Weights&Polynomial&SL/J&G/J&$(r,q)$&BHK dual&Method\\
 \hline
  \midrule
 \endfirsthead
 No. &Weights&Polynomial&SL/J&G/J&$(r,q)$&BHK dual&Method\\\hline    \midrule
 \endhead
37a	&(8,4,3,1;16)	&$x^2+xy^2+yz^4+w^{16}$	&1	&1	&$(9,w_{2,3}^{5})$					&58&III\\
37b	&(8,4,3,1;16)	&$x^2+y^4+yz^4+w^{16}$		&2	&2	&$(11,w_{2,3}^{-5})$					&37b&IV\\
	&			&						&	&1	&$(9,w_{2,3}^{5})$					&37b&III\\
58	&(6,5,4,1;16)	&$x^2z+xy^2+z^4+w^{16}$	&1	&1	&$(11,w_{2,3}^{-5})$					&37a&IV\\
\caption{Table for $m=16$} \label{tab-16}  \end{longtable}



\begin{longtable}{p{12pt}|c|c|c|c|c|c|c}
\multicolumn{7}{c}{}\\
No. &Weights&Polynomial&SL/J&G/J&$(r,q)$&BHK dual&Method\\
 \hline
  \midrule
 \endfirsthead
 No. &Weights&Polynomial&SL/J&G/J&$(r,q)$&BHK dual&Method\\\hline    \midrule
 \endhead
11a	&(15,10,3,2;30)		&$x^2+y^3+xz^5+w^{15}$ 	&1	&1	&$(10,w_{3,1}^{-1}\oplus w_{3,1}^{1})$	&22a&II\\
11b	&(15,10,3,2;30)		&$x^2+y^3+z^{10}+w^{15}$		&1	&1	&$(10,w_{3,1}^{-1}\oplus w_{3,1}^{1})$	&11b&II\\
22a	&(6,5,3,1;15)		&$x^2z+y^3+z^5+w^{15}$		&1	&1	&$(10,w_{3,1}^{-1}\oplus w_{3,1}^{1})$	&11a&II\\
22b	&(6,5,3,1;15)		&$x^2z+y^3+xz^3+w^{15}$	&1	&1	&$(10,w_{3,1}^{-1}\oplus w_{3,1}^{1})$	&22b&II\\
 \caption{Table for $m=15$} \label{tab-15} 
 \end{longtable}
 


\begin{longtable}{p{12pt}|c|c|c|c|c|c|c}
 \multicolumn{7}{c}{}\\
No. &Weights&Polynomial&SL/J&G/J&$(r,q)$&BHK dual&Method\\
 \hline
  \midrule
 \endfirsthead
 No. &Weights&Polynomial&SL/J&G/J&$(r,q)$&BHK dual&Method\\\hline    \midrule
 \endhead
40a	&(7,4,2,1;14)	&$x^2+y^3z+z^{7}+w^{14}$	&1	&1	&$(7,v\oplus w_{2,1}^{-1})$	&47	&III\\
40b	&(7,4,2,1;14)	&$x^2+y^3z+yz^{5}+w^{14}$	&2	&2	&$(13,v\oplus w_{2,1}^{1})$	&40b&II	\\
	&			&						&	&1	&$(7,v\oplus w_{2,1}^{-1})$	&40b	&III\\
47	&(21,14,4,3;42)	&$x^2+y^3+yz^{7}+w^{14}$	 &1	&1	&$(13,v\oplus w_{2,1}^{1})$	&40a&II	\\
 \caption{Table for $m=14$} \label{tab-14} \end{longtable}



\begin{longtable}{p{12pt}|c|c|c|c|c|c|c}
\multicolumn{7}{c}{}\\
No. &Weights&Polynomial&SL/J&G/J&$(r,q)$&BHK dual&Method\\
 \hline
  \midrule
 \endfirsthead
 No. &Weights&Polynomial&SL/J&G/J&$(r,q)$&BHK dual&Method\\\hline    \midrule
 \endhead
6a	&(5,2,2,1;10)	&$x^2+y^4z+z^5+w^{10}$		&2	&2	&$(8,w_{5,1}^{-1}\oplus 2w_{2,1}^1)$	&36a&I	\\
	&			&						&	&1	&$(6,u\oplus v)$					&36a&II	\\
6b	&(5,2,2,1;10)	&$x^2+y^5+z^5+w^{10}$		&5	&5	&$(14,u\oplus v)$					&6b	&II\\
	&			&						&	&1	&$(6,u\oplus v)$					&6b	&II\\
6c	&(5,2,2,1;10)	&$x^2+y^4z+yz^4+w^{10}$	&3	&3	&$(14,u\oplus v)$					&6c	&II\\
	&			&						&	&1	&$(6,u\oplus v)$					&6c	&II\\
11a	&(15,10,3,2;30)	&$x^2+y^3+z^{10}+yw^{10}$	&2	&2	&$(17,w_{2,1}^{1})$				&42a&I	\\
	&			&						&	&1	&$(10,v\oplus v)$					&42a&II	\\	
11b	&(15,10,3,2;30)&$x^2+y^3+z^{10}+w^{15}$		&1	&1	&$(10,v\oplus v)$					&11b&II	\\

36a	&(10,5,3,2;20)	&$x^2+y^4+yz^5+w^{10}$		&2	&2	&$(14,u\oplus v)$				&6a	&II\\
	&			&						&	&1	&$(12,w_{5,1}^{-1}\oplus 2w_{2,1}^{-1})$					&6a&I	\\
36b	&(10,5,3,2;20)	&$x^2+xy^2+yz^5+w^{10}$	&1	&1	&$(12,w_{5,1}^{-1}\oplus 2w_{2,1}^{-1})$					&63&I	\\
42a	&(5,3,1,1;10)	&$x^2+y^3w+z^{10}+w^{10}$	&2	&2	&$(10,v\oplus v)$					&11a&II	\\
	&			&						&	&1	&$(3,w_{2,1}^{-1})$				&11a&I	\\
42b	&(5,3,1,1;10)	&$x^2+y^3z+xz^5+w^{10}$	&1	&1	&$(3,w_{2,1}^{-1})$				&68&I	\\
42c	&(5,3,1,1;10)	&$x^2+y^3z+yz^7+w^{10}$	&4	&4	&$(17,w_{2,1}^{1})$				&42c&I	\\
	&			&						&	&2	&$(10,v\oplus v)$					&42c&II\\
	&			&						&	&1	&$(3,w_{2,1}^{-1})$				&42c&I	\\
63	&(4,3,2,1;10)	&$x^2z+y^2x+z^5+w^{10}$	&1	&1	&$(8,w_{5,1}^{-1}\oplus 2w_{2,1}^1)$	&36b&I	\\
68	&(13,10,4,3;30)	&$x^2z+y^3+yz^5+w^{10}$	&1	&1	&$(17,w_{2,1}^{1})$				&42b&I	\\
\caption{Table for $m=10$} \label{tab-10}  \end{longtable}




\begin{longtable}{p{12pt}|c|c|c|c|c|c|c}
\multicolumn{7}{c}{}\\
No. &Weights&Polynomial&SL/J&G/J&$(r,q)$&BHK dual&Method\\
 \hline
  \midrule
 \endfirsthead
 No. &Weights&Polynomial&SL/J&G/J&$(r,q)$&BHK dual&Method\\\hline    \midrule
 \endhead
 12a	&(9,6,2,1;18)	&$x^2+y^3+z^9+xw^9$		&3	&3	&$(16,w_{3,1}^{-1})$				&25a&I	\\
	&			&						&	&1	&$(4,w_{3,1}^1)$				&25a&I	\\
12b	&(9,6,2,1;18)	&$x^2+y^3+z^9+yw^{12}$		&1	&1	&$(4,w_{3,1}^1)$				&43a&I	\\
12c	&(9,6,2,1;18)	&$x^2+y^3+z^9+w^{18}$		&3	&3	&$(16,w_{3,1}^{-1})$				&12c&I	\\
	&			&						&	&1	&$(4,w_{3,1}^1)$				&12c&I	\\
18a	&(3,3,2,1;9)	&$x^3+y^3+xz^3+w^9$		&3	&3	&$(12,w_{3,1}^1\oplus w_{3,2}^1)$		&18a&IV	\\
	&			&						&	&1	&$(8,w_{3,1}^{-1}\oplus w_{3,2}^{-1})$	&18a&III	\\
18b	&(3,3,2,1;9)	&$x^3+xy^2+yz^3+w^9$		&2	&2	&$(12,w_{3,1}^1\oplus w_{3,2}^1)$		&18b&IV	\\
	&			&						&	&1	&$(8,w_{3,1}^{-1}\oplus w_{3,2}^{-1})$	&18b&III	\\
25a	&(4,3,1,1;9)	&$x^2w+y^3+z^9+w^9$		&3	&3	&$(16,w_{3,1}^{-1})$				&12a&I	\\
	&			&						&	&1	&$(4,w_{3,1}^1)$				&12a&I	\\
25b	&(4,3,1,1;9)	&$x^2w+y^3+z^9+yw^6$		&1	&1	&$(4,w_{3,1}^1)$				&43b&I	\\
25c	&(4,3,1,1;9)	&$x^2w+y^3+z^9+xw^5$		&3	&3	&$(16,w_{3,1}^{-1})$				&25a&I	\\
	&			&						&	&1	&$(4,w_{3,1}^1)$				&25a&I	\\
43a	&(18,11,4,3;36)	&$x^2+y^3w+z^9+w^{12}$	&1	&1	&$(16,w_{3,1}^{-1})$				&12b&I	\\
43b	&(18,11,4,3;36)	&$x^2+y^3w+z^9+xw^6$		&1	&1	&$(16,w_{3,1}^{-1})$				&25b&I	\\
\caption{Table for $m=9$} \label{tab-9}  \end{longtable}  
 


\begin{longtable}{p{12pt}|c|c|c|c|c|c|c}
\multicolumn{7}{c}{}\\
No. &Weights&Polynomial&SL/J&G/J&$(r,q)$&BHK dual&Method\\
 \hline
  \midrule
 \endfirsthead
 No. &Weights&Polynomial&SL/J&G/J&$(r,q)$&BHK dual&Method\\\hline    \midrule
 \endhead
2a	&(4,3,3,2;12)	&$x^3+y^3z+z^4+w^6$	&3	&3	&$(16,v\oplus w_{3,1}^1)$				&3a&I	\\
	&			&					&	&1	&$(10,4w_{3,1}^1)$				&3a&II	\\
2b	&(4,3,3,2;12)	&$x^3+y^3z+yz^3+w^6$	&1	&1	&$(10,4w_{3,1}^1)$				&2b	&II\\
2c	&(4,3,3,2;12)	&$x^3+y^4+z^4+w^6$	&2	&2	&$(10,4w_{3,1}^1)$				&2c&II	\\
	&			&					&	&1	&$(10,4w_{3,1}^1)$				&2c&II	\\
	
3a	&(2,2,1,1;6)	&$x^3+y^3+yz^4+w^6$	&3	&3	&$(10,4w_{3,1}^1)$				&2a	&II\\
	&			&					&	&1	&$(4,v\oplus w_{3,1}^{-1})$			&2a	&I\\
3b	&(2,2,1,1;6)	&$x^2y+y^3+z^6+w^6$	&6	&6	&$(19,w_{2,1}^{-1})$				&5a	&I\\
	&			&					&	&3	&$(16,v\oplus w_{3,1}^1)$				&5a&I	\\
	&			&					&	&2	&$(9,3w_{2,1}^{-1}\oplus 2w_{3,1}^{-1})$	&5a&I	\\
	&			&					&	&1	&$(4,v\oplus w_{3,1}^{-1})$			&5a	&I\\
3c	&(2,2,1,1;6)	&$x^3+xy^2+yz^4+w^6$	&1	&1	&$(4,v\oplus w_{3,1}^{-1})$			&57	&I\\
3d	&(2,2,1,1;6)	&$x^3+y^3+z^6+w^6$	&9	&9	&$(16,v\oplus w_{3,1}^1)$				&3d&I	\\
	&			&					&	&3	&$(16,v\oplus w_{3,1}^1)$				&3d	&I\\
	&			&					&	&3	&$(10,4w_{3,1}^1)$				&3d&II	\\
	&			&					&	&3	&$(10,4w_{3,1}^1)$				&3d&II	\\
	&			&					&	&3	&$(4,v\oplus w_{3,1}^{-1})$			&3d	&I\\
	&			&					&	&1	&$(4,v\oplus w_{3,1}^{-1})$			&3d	&I\\
3e	&(2,2,1,1;6)	&$x^2y+xy^2+z^6+w^6$	&3	&3	&$(16,v\oplus w_{3,1}^1)$				&3e	&I\\
	&			&					&	&1	&$(4,v\oplus w_{3,1}^{-1})$			&3e	&I\\
	
5a	&(3,1,1,1;6)	&$x^2+xy^3+z^6+w^6$	&6	&6	&$(16,v\oplus w_{3,1}^1)$				&3b&I	\\
	&			&					&	&3	&$(11,3w_{2,1}^1\oplus 2w_{3,1}^{1})$	&3b&I	\\
	&			&					&	&2	&$(4,v\oplus w_{3,1}^{-1})$			&3b	&I\\
	&			&					&	&1	&$(1,w_{2,1}^1)$				&3b&I	\\
5b	&(3,1,1,1;6)	&$x^2+y^5w+z^6+w^6$	&2	&2	&$(8,6w_{2,1}^{-1})$				&29	&II\\
	&			&					&	&1	&$(1,w_{2,1}^1)$				&29&I	\\
5c	&(3,1,1,1;6)	&$x^2+xy^3+yz^5+w^6$	&1	&1	&$(1,w_{2,1}^1)$				&56	&I\\
5d	&(3,1,1,1;6)	&$x^2+y^6+z^5w+zw^5$	&8	&8	&$(19,w_{2,1}^{-1})$				&5d	&I\\
	&			&					&	&4	&$(12,6w_{2,1}^1)$				&5d&II	\\
	&			&					&	&2	&$(8,6w_{2,1}^{-1})$				&5d	&II\\
	&			&					&	&1	&$(1,w_{2,1}^1)$				&5d&I	\\
5e	&(3,1,1,1;6)	&$x^2+y^6+z^6+w^6$	&12	&12	&$(19,w_{2,1}^{-1})$				&5e	&I\\
	&			&					&	&6	&$(12,6w_{2,1}^1)$				&5e	&II\\
	&			&					&	&4	&$(9,3w_{2,1}^{-1}\oplus 2w_{3,1}^{-1})$	&5e&I	\\
	&			&					&	&3	&$(11,3w_{2,1}^1\oplus 2w_{3,1}^{1})$	&5e	&I\\
	&			&					&	&2	&$(8,6w_{2,1}^{-1})$				&5e&II	\\
	&			&					&	&1	&$(1,w_{2,1}^1)$				&5e	&I\\

8a	&(6,3,2,1;12)	&$x^2+y^4+z^6+xw^6$	&2	&2	&$(14,2w_{2,1}^{-1}\oplus w_{3,1}^1)$	&23	&I\\
	&			&					&	&1	&$(6,2w_{2,1}^1\oplus w_{3,1}^{-1})$	&23	&I\\
8b	&(6,3,2,1;12)	&$x^2+y^4+z^6+yw^{9}$	&2	&2	&$(10,v\oplus 4w_{2,1}^{-1})$			&33a&I	\\
	&			&					&	&1	&$(6,2w_{2,1}^1\oplus w_{3,1}^{-1})$	&33a	&I\\
8c	&(6,3,2,1;12)	&$x^2+xy^2+z^6+yw^{9}$	&1	&1	&$(6,2w_{2,1}^1\oplus w_{3,1}^{-1})$	&70	&I\\
8d	&(6,3,2,1;12)	&$x^2+y^4+z^6+w^{12}$	&4	&4	&$(14,2w_{2,1}^{-1}\oplus w_{3,1}^1)$	&8d&I	\\
	&			&					&	&2	&$(14,2w_{2,1}^{-1}\oplus w_{3,1}^1)$	&8d	&I\\
	&			&					&	&2	&$(6,2w_{2,1}^1\oplus w_{3,1}^{-1})$	&8d&I	\\	
	&			&					&	&2	&$(10,v\oplus 4w_{2,1}^{-1})$			&8d&II	\\
	&			&					&	&1	&$(6,2w_{2,1}^1\oplus w_{3,1}^{-1})$	&8d	&I\\	
8e	&(6,3,2,1;12)	&$x^2+xy^2+z^6+w^{12}$	&2	&2	&$(14,2w_{2,1}^{-1}\oplus w_{3,1}^1)$	&8e&I	\\
	&			&					&	&1	&$(6,2w_{2,1}^1\oplus w_{3,1}^{-1})$	&8e	&I\\
	
23	&(5,3,2,2;12)	&$x^2w+y^4+z^6+w^6$	&2	&2	&$(14,2w_{2,1}^{-1}\oplus w_{3,1}^1)$	&8a	&I\\
	&			&					&	&1	&$(6,2w_{2,1}^1\oplus w_{3,1}^{-1})$	&8a	&I\\
29	&(15,6,5,4;30)	&$x^2+y^5+z^6+yw^6$	&2	&2	&$(19,w_{2,1}^{-1})$				&5b&I	\\
	&			&					&	&1	&$(12,6w_{2,1}^1)$				&5b&II	\\
	
33a	&(9,4,3,2;18)	&$x^2+y^4w+z^6+w^{9}$	&2	&2	&$(14,2w_{2,1}^{-1}\oplus w_{3,1}^1)$	&8b	&I\\
	&			&					&	&1	&$(10,v\oplus 4w_{2,1}^{-1})$			&8b	&II\\
33b	&(9,4,3,2;18)	&$x^2+y^4w+z^6+yw^{7}$&1	&1	&$(10,v\oplus 4w_{2,1}^{-1})$			&33b&II	\\
56	&(11,8,6,5;30)	&$x^2y+y^3z+z^5+w^6$	&1	&1	&$(19,w_{2,1}^{-1})$				&5c	&I\\
57	&(9,6,5,4;24)	&$x^2y+y^4+xz^3+w^6$	&1	&1	&$(16,v\oplus w_{3,1}^1)$				&3c	&I\\
70	&(8,5,3,2;18)	&$x^2w+xy^2+z^6+w^{9}$&1	&1	&$(14,2w_{2,1}^{-1}\oplus w_{3,1}^1)$	&8c	&I\\
\caption{Table for $m=6$} \label{tab-6}    \end{longtable}  

}
\normalsize

\appendix

\section{Computer code for computing lattices}

In order to compute the lattices using the configuration of curves, we used the following Magma code, developed by Antonio Laface and added here with his permission. 

This first function takes an even bilinear form $B$, and outputs generators of the discriminant group and the values of $q_B$ on these generators. 

\begin{alltt}
disc:=function(M)
 S,A,B:=SmithForm(M);
 l:=[[S[i,i],i]: i in [1..NumberOfColumns(S)]| S[i,i] notin {0,1}];
 sA:=Matrix(Rationals(),ColumnSubmatrixRange(B,l[1][2],l[#l][2]));
 for i in [1..#l] do
  MultiplyColumn(\(\sim\)sA,1/l[i][1],i);
 end for;
 Q:=Transpose(sA)*Matrix(Rationals(),M)*sA;
 for i,j in [1..NumberOfColumns(Q)] do
  if i ne j then
   Q[i,j]:=Q[i,j]-Floor(Q[i,j]);
  else
   Q[i,j]:=Q[i,j]-Floor(Q[i,j])+ (Floor(Q[i,j]) mod 2);
  end if;
 end for; 
 return [l[i][1]: i in [1..#l]], Q;
end function;    
\end{alltt}

The next function determines whether a given even bilinear form has overlattices. Input is a matrix $M$ and a number $n$. The output is the subgroup of $A_L$ that takes values equal to $n$ modulo $2\ZZ$. For isotropic subgroups of the discriminant group, use $n=0$.  
\begin{verbatim}
isot:=function(M,n)
 v,U:=disc(M);
 Q:=Rationals();
 A:=AbelianGroup(v);
 return [Eltseq(a) : a in A |
 mod2(Matrix(Q,1,#v,Eltseq(a))*U*Matrix(Q,#v,1,Eltseq(a)))[1,1] eq n];
end function;
\end{verbatim}

The function {\verb mod2 } is as follows: 
\begin{verbatim}
mod2:=function(Q);
 for i,j in [1..Nrows(Q)] do
  if i ne j then Q[i,j]:=Q[i,j]-Floor(Q[i,j]);
  else Q[i,j]:=Q[i,j]-2*Floor(Q[i,j]/2);
  end if;
 end for;
 return Q;
end function;

\end{verbatim}

Finally, the following function compares two discriminant quadratic forms, and lets us know if they are the same finite quadratic form or not. This is not always easy to check due to the relations in Proposition~\ref{t:relations}.
\begin{verbatim}
dicompare:=function(M,Q)
 v,U:=disc(M);
 w,D:=disc(Q);
 if v ne w then return false; end if;
 A:=AbelianGroup(v);
 Aut:=AutomorphismGroup(A);
 f,G:=PermutationRepresentation(Aut);
 h:=Inverse(f);
 ll:=[Matrix(Rationals(),[Eltseq(Image(h(g),A.i)) : i in [1..Ngens(A)]]) : g in G];
 dd:=[mod2(a*U*Transpose(a)) : a in ll];
 return D in dd;
end function;
\end{verbatim}

\bibliographystyle{plain}
\bibliography{references}

\end{document}